\newcommand{\argmax}{\mathop{\mathrm{argmax}}}
\newtheorem{Theorem}{Theorem}
\newtheorem{Remark}{Remark}
\newtheorem{Lemma}{Lemma}
\newtheorem{Proposition}{Proposition}
\newtheorem{Corollary}{Corollary}
\newtheorem{prop}{Proposition}
\journal{Journal of Discrete Applied Mathematics}
\begin{document}
\sloppy
\begin{frontmatter}

\title{Improved Bounds for the Greedy Strategy in  Optimization Problems with Curvature}

%% Group authors per affiliation:
%\author{Yajing Liu\fnref{myfootnote}}
%\address{Department of Electrical and Computer Engineering, Colorado State University, Fort Collins, CO 80523, USA}
%\fntext[myfootnote]{Since 1880.}

%% or include affiliations in footnotes:
\author[mymainaddress]{Yajing Liu\corref{mycorrespondingauthor}}
\cortext[mycorrespondingauthor]{Corresponding author.}
\ead{yajing.liu@ymail.com}
\author[mymainaddress,mysecondaryaddress]{Edwin K. P. Chong}
\ead{Edwin.Chong@Colostate.Edu}
\author[mymainaddress,mysecondaryaddress]{Ali Pezeshki}
\ead{Ali.Pezeshki@Colostate.Edu}
\address[mymainaddress]{Department of Electrical and Computer Engineering, Colorado~State~University, Fort~Collins, CO 80523, USA}
\address[mysecondaryaddress]{Department of Mathematics, Colorado State University, Fort Collins, CO 80523, USA}
\begin{abstract}
Consider the problem of choosing a set of actions to optimize an objective function that is a real-valued polymatroid function subject to matroid constraints. The greedy strategy provides an approximate solution to the optimization problem, and it is known to satisfy some performance bounds in terms of the total curvature.  The total curvature depends on the value of objective function on sets outside the constraint matroid.  If we are given a function defined only on the matroid, the problem still makes sense, but the existing bounds involving the total curvature do not apply. This is puzzling: If the optimization problem is perfectly well defined, why should the bounds no longer apply? This motivates an alternative formulation of such bounding techniques.  The first question that comes to mind is whether it is possible to extend a polymatroid function defined on a matroid to one on the entire power set. This was recently shown to be negative in general. Here, we provide necessary and sufficient conditions for the existence of an \emph{incremental} extension of a polymatroid function defined on the uniform matroid of rank $k$ to one defined on the uniform matroid of rank $k+1$, together with an algorithm for constructing the extension. Whenever a polymatroid objective function defined on a matroid can be extended to the entire power set, the greedy approximation bounds involving the total curvature of the extension apply. However, these bounds still depend on sets outside the constraint matroid. Motivated by this, we define a new notion of curvature called \emph{partial curvature}, involving only sets in the matroid. We derive necessary and sufficient conditions for an extension of the function to have a total curvature that is equal to the partial curvature. Moreover, we prove that the bounds in terms of the partial curvature are in general improved over the previous ones.

To illustrate our results, we first present a task scheduling problem to show that a polymatroid function defined on the matroid can be extended to one defined on the entire power set, and we also derive bounds in terms of the partial curvature, which is demonstrably better than the bound in terms of the total curvature.  As a counterpoint, we then provide an adaptive sensing problem where the total curvature of its extension cannot be made equal to the partial curvature. Nonetheless,  for our specific extension, our result gives rise to a stronger bound.
\end{abstract}

\begin{keyword}
curvature\sep greedy\sep matroid\sep polymatroid \sep submodular
%\MSC[2010] 00-01\sep  99-00
\end{keyword}

\end{frontmatter}

%\linenumbers

\section{Introduction}
\subsection{Background}
Consider the problem of optimally choosing a set of actions  to maximize an objective function. Let $X$ be a finite ground set of all possible  actions and $f:2^X\rightarrow \mathbb{R}$ be an objective function defined on the power set $2^X$ of $X$. The set function $f$ is said to be a \emph{polymatroid} function \cite{Boros2003} if it is \emph{submodular, monotone}, and $f(\emptyset)=0$ (definitions of being submodular and monotone are given in Section~\ref{sc:II}). Let $\mathcal{I}$ be a non-empty collection of subsets of the ground set $X$. The pair $(X,\mathcal{I})$ is called a \emph{matroid} if $\mathcal{I}$ satisfies the \emph{hereditary} and \emph{augmentation} properties (definitions of hereditary and augmentation are introduced in Section~\ref{sc:II}).  The aim is to find a set in $\mathcal{I}$ to maximize the objective function $f$:
\begin{align}\label{problem}
\begin{array}{l}
\text{maximize} \ \    f(M) \\
\text{subject to} \ \ M\in \mathcal{I}.
\end{array}
\end{align}
The pair $(X,\mathcal{I})$ is said to be a \emph{uniform matroid} of rank $K$ ($K\leq |X|$) when $\mathcal{I}=\{S\subseteq X: |S|\leq K\}$, where $|\cdot|$ denotes cardinality. A uniform matroid is a special matroid, so any result for a matroid constraint also applies to a uniform matroid constraint.

Finding the optimal solution to problem~(\ref{problem}) in general is NP-hard. The \emph{greedy strategy} provides a computationally feasible approach to finding an approximate solution to~(\ref{problem}). It starts with the empty set, and then iteratively adds to the current solution set one element that results in the largest gain in the objective function, while satisfying the matroid constraints. A detailed definition of the greedy strategy is given in Section~\ref{sc:II}. The performance of the greedy strategy has attracted the attention of many researchers, and some key developments will be reviewed in the following section.
\subsection{Review of Previous Work}

Nemhauser \emph{et al.}~\cite{nemhauser1978}, \cite{nemhauser19781} proved that, when $f$ is a polymatroid function, the  greedy strategy yields a \emph{$1/2$-approximation}\footnotemark
% ...
\footnotetext{The term $\beta$-approximation means that $f(G)/f(O)\geq \beta$, where $G$ and $O$ denote a greedy solution and an optimal solution, respectively.} for a general matroid and a $(1-e^{-1})$-approximation for a uniform matroid. By introducing the \emph{total curvature} $c(f)$,\footnotemark
%...
\footnotetext{When there is no ambiguity, we simply write $c$ to denote $c(f)$.} 
$$c(f)=\max_{j\in X^, f(\{j\})\neq f(\emptyset)}\left\{1-\frac{f(X)-f({X\setminus\{j\}})}{f(\{j\})-f(\emptyset)}\right\},$$
Conforti and Cornu{\'e}jols~\cite{conforti1984submodular} showed that, when $f$ is  a polymatroid function, the  greedy strategy achieves   a $1/(1+c)$-approximation for a general matroid and a $(1-(1-{c}/{K})^K)/c$-approximation for a uniform matroid, where $K$ is the rank of the uniform matroid. When $K$ tends to infinity, the bound $(1-(1-{c}/{K})^K)/c$ tends to $(1-e^{-c})/c$ from above. For a polymatroid function,  the total curvature $c$ takes values on the interval $ (0,1]$. In this case, we have $1/(1+c)\geq1/2$ and $(1-(1-{c}/{K})^K)/c > (1-e^{-c})/c \geq (1-e^{-1})$, which implies that the bounds $1/(1+c)$ and $(1-(1-{c}/{K})^K)/c$ are stronger than
the bounds $1/2$ and $(1-e^{-1})$ in \cite{nemhauser1978} and \cite{nemhauser19781}, respectively. Vondr{\'a}k~\cite{vondrak2010submodularity} proved that for a polymatroid function, the continuous greedy strategy gives a $(1-e^{-c})/c$-approximation for any matroid. Sviridenko \emph{et al.}~\cite{sviridenko2015} proved that, a modified continuous greedy strategy gives a $(1-ce^{-1})$-approximation for any matroid, the first improvement over the greedy $(1-e^{-c})/{c}$-approximation of Conforti and Cornu{\'e}jols from~\cite{conforti1984submodular}.

Suppose that the objective function $f$ in problem~(\ref{problem}) is a polymatroid function and the rank of the matroid $(X,\mathcal{I})$ is $K$. By the augmentation property of a matroid and the monotoneity of $f$, any optimal solution can be extended to a set of size $K$. By the definition of the greedy strategy (see Section~\ref{sc:II}), any greedy solution is of size $K$. For the greedy strategy, under a general matroid constraint and a uniform matroid constraint, the performance bounds $1/(1+c)$ and $(1-(1-{c}/{K})^K)/c$ from \cite{conforti1984submodular} are the best so far, respectively, in terms of the total curvature $c$. However, the total curvature $c$, by definition, depends on the function values on sets outside the matroid $(X,\mathcal{I})$.
This gives rise to two possible issues when applying existing bounding results involving the total curvature $c$: 
\begin{itemize}
\item [1.] If we are given a function $f$ defined only on $\mathcal{I}$, then problem~(1) still makes sense, but the total curvature is no longer well defined. This means that the existing results involving the total curvature do not apply. But this surely is puzzling: if the optimization problem (1) is perfectly well defined, why should the bounds no longer apply?
\item [2.] Even if the function $f$ is defined on the entire $2^X$, the fact that the total curvature $c$ involves sets outside the matroid is puzzling. Specifically, if the optimization problem~(1) involves only sets in the matroid, why should the bounding results rely on a quantity $c$ that depends on sets outside the matroid? 
\end{itemize}
The two reasons above motivate us to investigate  more applicable bounds involving only sets in the matroid. 
\subsection{Contributions}
In this paper,  we provide necessary and sufficient conditions for the existence of an {incremental} extension of a polymatroid function defined on the uniform matroid of rank $k$ to one defined on the uniform matroid of rank $k+1$, together with an algorithm for constructing the extension.  Then, it follows that for problem~(\ref{problem}), provided that the polymatroid objective function defined on a matroid can be extended to the entire power set, the greedy strategy satisfies the  bounds $1/(1+d)$ and $(1-(1-{d}/{K})^K)/d$  for a general matroid and a uniform matroid, respectively, where $d=\inf_{g\in \Omega_f} c(g)$ and $\Omega_f$ is the set of all polymatroid functions $g$ on $2^X$ that agree with $f$ on $\mathcal{I}$ (i.e., the set of all extensions of $f$ to $2^X$). When the objective function $f$ is defined on the entire power set, it is clear that $d\leq c(f)$, which implies that the bounds are improved. However, the bounds still depend on sets outside the matroid, because of the way $d$ is defined.

Next, we define a new notion of curvature, called partial curvature and denoted by $b$, involving only sets in the matroid, and we prove that $b(f)\leq c(g)$, where $g$ is any extension of $f$ to the entire power set.  We derive necessary and sufficient conditions for the existence of an extended polymatroid function $g$ such that $c(g)=b(f)$. This gives rise to improved bounds $1/(1+b(f))$ and $(1-(1-{b(f)}/{K})^K)/b(f)$ for a general matroid and a uniform matroid, respectively.

Finally, we present two examples. We first present a task scheduling problem to show that a polymatroid function defined on the matroid can be extended to one defined on the entire power set, and we also derive bounds in terms of the partial curvature, which is demonstrably better than the bound in terms of the total curvature.  As a counterpoint, we then provide an adaptive sensing problem where the total curvature of its extension cannot be made equal to the partial curvature. Nonetheless,  for our specific extension, our result gives rise to a stronger bound.
\subsection{Organization}
In Section~2, we first introduce definitions of polymatroid functions, matroids, and curvature, and then we review performance bounds in terms of the total curvature from \cite{conforti1984submodular}. In Section~\ref{sec3.1},  we prove that any monotone set function defined on the matroid can be extended to one defined on the entire power set and the extended  function can be expressed in a certain  form. In Section~\ref{sec3.2}, we provide necessary and sufficient conditions for the existence of an {incremental} extension of a polymatroid function defined on the uniform matroid of rank $k$ to one defined on the uniform matroid of rank $k+1$. In Section~\ref{sec3.3}, we introduce a particular extension we call the majorizing extension and explore  what kinds of polymatroid  functions can be majorizingly extended to ones defined on the whole power set. In Section~\ref{sec3.4}, we provide an algorithm  for constructing the extension of a polymatroid function defined on a matroid to the entire power set. In Section~\ref{sec4}, we define the partial curvature involving only sets in the matroid and obtain improved bounds in terms of the partial curvature subject to certain necessary and sufficient conditions. In Section~\ref{Examples}, we illustrate our results by considering a task scheduling problem and an adaptive sensing problem. In Section~\ref{openissues}, we conclude with a discussion of open issues. 
\section{Preliminaries}\label{sc:II}
\subsection{Polymatroid Functions and Curvature}
The definitions and terminology in this paragraph are standard (see, e.g., \cite{Edmonds}, \cite{Tutte}), but are included for completeness.
Let $X$ be a finite ground set of actions, and $\mathcal{I}$ be a non-empty collection of subsets of $X$. Given a pair $(X,\mathcal{I})$, the collection $\mathcal{I}$ is said to be \emph{hereditary} if it satisfies property i below and  has the \emph{augmentation} property if it satisfies property ii below:
\begin{itemize}
\item  [i.] (Hereditary) For all $B\in\mathcal{I}$, any set $A\subseteq B$ is also in $\mathcal{I}$.
\item  [ii.] (Augmentation) For any $A,B\in \mathcal{I}$, if $|B|>|A|$, then there exists $j\in B\setminus A$ such that $A\cup\{j\}\in\mathcal{I}$.
\end{itemize}
The pair $(X,\mathcal{I})$ is called a \emph{matroid} if it satisfies both properties i and ii. The pair $(X,\mathcal{I})$ is called a \emph{uniform matroid} when $\mathcal{I}=\{S\subseteq X: |S|\leq K\}$ for a given $K$, called the \emph{rank} of  $(X,\mathcal{I})$. 	In general, the \emph{rank} of a matroid $(X,\mathcal{I})$ is the cardinality of its maximal set. 

Let $2^X$ denote the power set of $X$, and define a set function $f$: $2^X\rightarrow \mathbb{R}$.
The set function $f$ is said to be \emph{monotone} and \emph{submodular} if it satisfies properties~1 and~2 below, respectively:
\begin{itemize}
\item [1.] (Monotone) For any $A\subseteq B\subseteq X$, $f(A)\leq f(B)$.
\item [2.] (Submodular) For any $A\subseteq B\subseteq X$ and $j\in X\setminus B$, $f(A\cup\{j\})-f(A)\geq f(B\cup\{j\})-f(B)$.
\end{itemize}
A set function $f$: $2^X\rightarrow \mathbb{R}$ is called a \emph{polymatroid function} \cite{Boros2003} if it is monotone, submodular, and $f(\emptyset)=0$, where $\emptyset$ denotes the empty set. The submodularity in property~2 means that the additional value accruing from an extra action decreases as the size of the input set increases. This property is also called the \emph{diminishing-return} property in economics.

The \emph{total curvature} \cite{conforti1984submodular} of a set function $f$ is defined as 
\begin{equation}
\label{totalcurvaturedef}
c(f)=\max\limits_{\substack{j\in X \\f(\{j\})\neq f(\emptyset)}}\left\{1-\frac{f(X)-f(X\setminus \{j\})}{f(\{j\})-f(\emptyset)}\right\}.
\end{equation}
 For convenience, we use $c$ to denote $c(f)$ when there is no ambiguity. Note that $0\leq c\leq 1$ when $f$ is a polymatroid function, and $c=0$ if and only if $f$ is  \emph{additive}, i.e., for any set $A\subseteq X$, $f(A)=\sum_{i\in A}f(\{i\})$. When $c=0$, it is easy to check that the greedy strategy coincides with the optimal strategy. So in the rest of the paper, when we assume that $f$ is a polymatroid function, we only consider $c\in(0,1]$.

\subsection{Performance Bounds in Terms of  Total Curvature}
In this section, we review two theorems from \cite{conforti1984submodular}, which bound the performance of the  greedy strategy using the total curvature $c$ for  general matroid constraints and  uniform matroid constraints. We will use these two theorems to derive bounds in Section~\ref{sec4}.

We first define optimal and greedy solutions for \eqref{problem} as follows:

\emph{Optimal solution}: 
A set $O$ is called an optimal solution of (\ref{problem}) if 
\begin{align*}
O\in\mathop{\argmax}\limits_{M\in \mathcal{I}} f(M),
\end{align*}
 where the right-hand side denotes the collection of arguments that maximize $ f(\cdot)$ on $\mathcal{I}$. Note that there may exist more than one optimal solution for problem~(\ref{problem}). When $(X,\mathcal{I})$ is a matroid of rank  $K$, then any optimal solution can be extended to a set of size $K$ because of the augmentation property of the matroid and the monotoneity of the set function $f$.

\emph{Greedy solution}:
A set $G=\{g_1,g_2,\ldots,g_{K}\}$ is called a \emph{greedy} solution of~(\ref{problem}) if
\begin{align*}
g_1&\in\mathop{\argmax}\limits_{\{g\}\in\mathcal{I}} f(\{g\}),
\end{align*} 
and 
for $i=2,\ldots,K,$
\begin{align*}
g_i&\in\mathop{\argmax}\limits_{\substack{g\in{X}\\ \{g_1,\ldots,g_{i-1},g\}\in\mathcal{I}}} f(\{g_1,g_2,\ldots,g_{i-1},g\}).
\end{align*}
Note that there may exist more than one greedy solution for problem~(\ref{problem}). 
\begin{Theorem}\cite{conforti1984submodular}
\label{Theorem2.1}
Let  $(X,\mathcal{I})$ be a matroid and $f$: $2^X\rightarrow \mathbb{R}$ be a polymatroid function with total curvature $c$. Then, any greedy solution $G$ satisfies
$$\frac{f(G)}{f(O)}\geq \frac{1}{1+c}.$$
\end{Theorem}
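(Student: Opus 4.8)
The plan is to prove the statement by the classical combination of three ingredients: the curvature inequality, a greedy-compatible matching between the greedy basis and the optimal basis supplied by the matroid exchange property, and the submodular ``union bound''. Throughout, write $G=\{g_1,\dots,g_K\}$ in the order the greedy strategy selects its elements, put $G_i=\{g_1,\dots,g_i\}$ with $G_0=\emptyset$, and let $\rho_i=f(G_i)-f(G_{i-1})$ be the $i$-th greedy increment, so that $f(G)=\sum_{i=1}^K\rho_i$ since $f(\emptyset)=0$. By the augmentation property and monotonicity (as already noted for optimal solutions), we may take $O$ to be a basis of $(X,\mathcal I)$ with $|O|=K=|G|$.

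First I would record the consequence of the curvature that does the real work. Evaluating the definition of $c$ at a fixed $j$ gives $f(X)-f(X\setminus\{j\})\ge (1-c)f(\{j\})$, and since for any $A\subseteq X\setminus\{j\}$ submodularity yields $f(A\cup\{j\})-f(A)\ge f(X)-f(X\setminus\{j\})$, we obtain the pointwise bound $f(A\cup\{j\})-f(A)\ge (1-c)f(\{j\})$ for every $A$ and every $j\notin A$. In words, no marginal can fall below a $(1-c)$ fraction of the element's stand-alone value; this is exactly what converts an additive ``$2f(G)$'' estimate into the multiplicative ``$(1+c)f(G)$'' one.

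Next I would set up the matroid matching. Since $G$ and $O$ are two bases, the exchange/augmentation property produces a bijection $\pi:G\to O$ that is the identity on $G\cap O$ and is compatible with the greedy order, meaning $G_{i-1}\cup\{\pi(g_i)\}\in\mathcal I$ for each $i$. Hence at step $i$ the element $\pi(g_i)$ was a feasible candidate, and the greedy choice of $g_i$ gives the greedy dominance $\rho_i\ge f(G_{i-1}\cup\{\pi(g_i)\})-f(G_{i-1})$. Establishing this order-compatible bijection (a Brualdi-type symmetric-exchange statement) is the first technical point to get right.

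Finally I would assemble the estimate. Starting from $f(O)\le f(G\cup O)$ by monotonicity, I would telescope the elements of $O$ onto $G$ and bound each term first by submodularity (reducing the conditioning set to $G_{i-1}$) and then by the greedy dominance above; by itself this only reproduces the factor $2$. The improvement comes from feeding in the pointwise curvature bound: the marginals of the optimal elements over the large set $G\cup(\cdots)$ must be discounted against their stand-alone values $f(\{\cdot\})$, while the same stand-alone values lower-bound the greedy increments $\rho_i$ through that bound. Balancing these two uses of the stand-alone values is what trades the coefficient $1$ for the coefficient $c$ in front of $f(G)$, yielding $f(O)\le (1+c)f(G)$ and hence $f(G)/f(O)\ge 1/(1+c)$. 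I expect this last balancing step --- simultaneously using each $f(\{\cdot\})$ as an upper bound on an optimal marginal and as a lower bound on a greedy increment, consistently across the matching $\pi$ --- to be the main obstacle, since it is where submodularity, the curvature, and the exchange structure must be made to interact in the right direction.
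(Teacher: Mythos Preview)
The paper does not prove Theorem~\ref{Theorem2.1}; it is quoted from \cite{conforti1984submodular} and used as a black box. So there is no ``paper's own proof'' to compare against. Your outline is the right skeleton of the Conforti--Cornu\'ejols argument (curvature inequality, an order-compatible Brualdi-type bijection $\pi:G\to O$, and the submodular telescoping), and your first two ingredients are stated correctly.

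The place where your plan slips is exactly the ``balancing'' step you flag. As you describe it, you would bound the marginals of the \emph{optimal} elements $o_i=\pi(g_i)$ over sets containing $G$ by their stand-alone values $f(\{o_i\})$ (submodularity), and simultaneously use the curvature bound on the greedy step to get $(1-c)f(\{o_i\})\le \rho_i$. Combining these only yields $f(O)-f(G)\le \sum_{i\in I}f(\{o_i\})\le \tfrac{1}{1-c}\sum_{i\in I}\rho_i\le \tfrac{1}{1-c}f(G)$, hence $f(O)\le \tfrac{2-c}{1-c}\,f(G)$, which is strictly weaker than $(1+c)f(G)$ for every $c\in(0,1)$. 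Using $f(\{o_i\})$ ``both ways'' cannot produce the coefficient $c$.

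The correct move is to apply the curvature on the \emph{greedy} side: telescope $G\setminus O$ onto $O$ and use $f(A\cup\{g_i\})-f(A)\ge (1-c)f(\{g_i\})\ge (1-c)\rho_i$ (the last inequality is submodularity, since $f(\{g_i\})\ge \rho_i$). This gives $f(G\cup O)-f(O)\ge (1-c)\sum_{i\in I}\rho_i$. Combine it with your telescoping of $O\setminus G$ onto $G$, namely $f(G\cup O)-f(G)\le \sum_{i\in I}\rho_i$, and subtract:
\[
f(O)-f(G)\;\le\;\sum_{i\in I}\rho_i-(1-c)\sum_{i\in I}\rho_i\;=\;c\sum_{i\in I}\rho_i\;\le\;c\,f(G),
\]
which is the desired $(1+c)$ bound. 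In short: the stand-alone values that matter are $f(\{g_i\})$, used as \emph{upper} bounds on $\rho_i$ and paired with the curvature lower bound on the marginals of $g_i$ over sets containing $O$; your write-up has the roles of $O$ and $G$ (and the direction ``lower-bound'') swapped in that last paragraph.
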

When $f$ is  a polymatroid function, we have $c\in(0,1]$, and therefore $1/(1+c)\in[1/2,1)$. Theorem \ref{Theorem2.1} applies to any matroid. This means that the bound ${1}/(1+c)$ holds for a uniform matroid  too. Theorem \ref{Theorem2.2} below provides a tighter bound when $(X,\mathcal{I})$ is a uniform matroid.
\begin{Theorem}\cite{conforti1984submodular}
\label{Theorem2.2}
Let $(X,\mathcal{I})$ be a uniform matroid of rank $K$ and  $f$: $2^X\rightarrow \mathbb{R}$ be a polymatroid function with total curvature $c$. Then, any greedy solution $G$ satisfies
\begin{align*}
\frac{f(G)}{f(O)}&\geq\frac{1}{c}\left(1-\left(1-\frac{c}{K}\right)^K\right)> \frac{1}{c}\left(1-e^{-c}\right).
\end{align*}
\end{Theorem}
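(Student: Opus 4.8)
The plan is to adapt the classical greedy analysis of Nemhauser \emph{et al.} so that it keeps track of the total curvature $c$. Write $G_i=\{g_1,\dots,g_i\}$ for the partial greedy solution after $i$ steps, $G_0=\emptyset$, and let $\rho_i=f(G_i)-f(G_{i-1})$ be the $i$-th greedy increment; submodularity makes the sequence $\rho_1\ge\rho_2\ge\cdots\ge\rho_K$ non-increasing, and $f(G)=\sum_{i=1}^{K}\rho_i$. The feature I would exploit is that in a uniform matroid of rank $K$ every set of size at most $K$ is independent, so at each step every element of the optimal set $O$ (which I take of size $K$, using monotonicity and augmentation) is an admissible candidate. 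Greedy optimality at step $i$ then gives $\rho_i\ge f(G_{i-1}\cup\{o\})-f(G_{i-1})$ for every $o\in O$, and combining this with submodularity in the usual way yields the per-step estimate $f(O)\le f(G_{i-1})+K\rho_i$ for each $i$.

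Used alone, this family of inequalities only reproduces the curvature-free $(1-e^{-1})$ bound, so the entire content of the theorem is the curvature refinement. Here I would bring in the defining inequality of the total curvature, $f(X)-f(X\setminus\{j\})\ge(1-c)\,f(\{j\})$, which by submodularity upgrades to $f(A)\ge(1-c)\sum_{j\in A}f(\{j\})$ for every $A$: the greedy solution cannot have wasted more than a $c$-fraction of the additive value of the elements it has already selected. Feeding this lower bound on the value locked in by $G_{i-1}$ back into the per-step estimates, and treating the resulting system as a factor-revealing linear program in the increments $\rho_1,\dots,\rho_K$ subject to their monotonicity, one is led to the worst case $\rho_i\propto(1-c/K)^{i-1}$. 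Summing the geometric series and using the identity $\tfrac1c\bigl(1-(1-\tfrac cK)^K\bigr)=\tfrac1K\sum_{i=1}^{K}(1-\tfrac cK)^{i-1}$ then produces exactly the claimed factor $\tfrac1c\bigl(1-(1-\tfrac cK)^K\bigr)$.

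The main obstacle is precisely this curvature bookkeeping. The difficulty is that the improvement over $1-e^{-1}$ is not captured by any naive per-step strengthening: one can check that the tempting inequality $K\rho_i\ge f(O)-c\,f(G_{i-1})$ already fails on small examples, so the curvature must be charged globally, balancing the deficit of the late (small) increments against the surplus of the early ones through the monotonicity of the $\rho_i$. Once the factor is established, the final strict inequality is elementary: since $\ln(1-c/K)<-c/K$ for $c\in(0,1]$, we have $(1-c/K)^K<e^{-c}$, whence $\tfrac1c\bigl(1-(1-\tfrac cK)^K\bigr)>\tfrac1c(1-e^{-c})$, as asserted.
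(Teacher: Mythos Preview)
The paper does \emph{not} prove Theorem~\ref{Theorem2.2}; it is quoted verbatim from Conforti and Cornu{\'e}jols~\cite{conforti1984submodular} as background (``In this section, we review two theorems from~\cite{conforti1984submodular}'') and is used only as a black box in the proof of Theorem~\ref{Improvedbound}. There is therefore no proof in this paper to compare your proposal against.

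For what it is worth, your sketch is broadly along the lines of the original Conforti--Cornu{\'e}jols argument, and you are right that the naive per-step strengthening $K\rho_i\ge f(O)-c\,f(G_{i-1})$ is not what makes the proof go through. The actual argument establishes, for each $t$, an inequality of the type
\[
f(O)\ \le\ \sum_{g_j\in G_t\cap O}\rho_j\;+\;c\!\!\sum_{g_j\in G_t\setminus O}\rho_j\;+\;|O\setminus G_t|\,\rho_{t+1},
\]
obtained by sandwiching $f(O\cup G_t)$ between a lower bound from the curvature inequality $f(S\cup\{j\})-f(S)\ge(1-c)f(\{j\})\ge(1-c)\rho_j$ applied to the elements of $G_t\setminus O$, and an upper bound from submodularity plus greedy optimality applied to the elements of $O\setminus G_t$. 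The resulting family of linear inequalities in $(\rho_1,\dots,\rho_K)$, together with $\rho_1\ge\cdots\ge\rho_K$, is then minimized (this is the ``factor-revealing LP'' you allude to), and the extremal configuration is the geometric sequence $\rho_i\propto(1-c/K)^{i-1}$. Your proposal names all the right ingredients but stops short of writing down this refined inequality and carrying out the LP step; as a proof it is still a plan rather than an argument. The strict inequality $(1-(1-c/K)^K)/c>(1-e^{-c})/c$ at the end is handled correctly.
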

\vspace{2mm}
The function $(1-(1-{c}/{K})^K)/c$ is nonincreasing in $K$ for $c\in (0,1]$ and $(1-(1-{c}/{K})^K)/c\searrow (1-e^{-c})/c$ when $K\rightarrow \infty$; therefore, $(1-(1-{c}/{K})^K)/c > (1-e^{-c})/c$ when $f$ is a polymatroid function. Also it is easy to check that $(1-e^{-c})/{c} > 1/(1+c)$ for $c\in(0,1]$, which implies that the bound $(1-(1-{c}/{K})^K)/c$ is stronger than the bound $1/(1+c)$ in Theorem \ref{Theorem2.1}.

The bounds in Theorems~\ref{Theorem2.1} and~\ref{Theorem2.2} involve sets not in the matroid, so as stated they do not apply to optimization problems whose objective function is only defined for sets in the matroid. In the following section, we will explore the  extension of polymatroid functions that yield to the bounds in Theorems~\ref{Theorem2.1} and~\ref{Theorem2.2}.

\section{Function Extension}
\label{sec3}
\subsection{Monotone Extension}
\label{sec3.1}
The following proposition states that any monotone set function defined on the matroid $(X,\mathcal{I})$ can be extended to one defined on the entire power set $2^X$, and the extended function can be expressed  in a certain  form.
\begin{prop}
\label{mainprop1}
Let $(X,\mathcal{I})$ be a matroid of rank $K$ and $f:\mathcal{I}\rightarrow\mathbb{R}$ be a monotone set function. Then there exists a monotone set function $g:2^X\rightarrow\mathbb{R}$ satisfying the following conditions:
\begin{itemize}
\item [a.] $g(A)=f(A)$ for all $A\in\mathcal{I}$.
\item [b.] $g$ is monotone on $2^X$.
\end{itemize}
Moreover, any function $g:2^X\rightarrow\mathbb{R}$ satisfying the above two conditions can be expressed as
\begin{equation}
\label{monotoneform}
g(A)=\begin{cases}
f(A), &A\in\mathcal{I},\\
g(B^*)+d_A, & A\notin \mathcal{I},
\end{cases}
\end{equation}
where 
\begin{equation}
\label{Bstar}
B^*\in\argmax\limits_{\substack{B: B\subset A \\|B|=|A|-1}}g(B)
\end{equation}
and $d_A$ is a nonnegative number.
\begin{proof}
Condition~\emph{a} can be satisfied by construction: first set 
\begin{equation}
\label{monotonecon1}
g(A)=f(A)
\end{equation}
 for all $A\in\mathcal{I}$. To prove that there exists a monotone set function $g$ defined on the entire power set $2^X$ satisfying both conditions~\emph{a} and \emph{b}, we prove the following statement by induction:
There exists a set function $g$ of the  form
\begin{equation}
\label{monotoneform2}
g(A)=\begin{cases}
f(A), &A\in\mathcal{I},\\
g(B^*)+d_A, & A\notin \mathcal{I},
\end{cases}
\end{equation}
such that 
$g$ is monotone for sets of size up to $l$ ($l\leq K$), where $B^*$ is given in (\ref{Bstar}) and $d_A$ is a nonnegative number.

First, we prove that the above statement holds for $l=1$. For $g$ to be monotone for sets of size up to 1, it suffices to have that $g(A)\geq 0$ for any set $A\in 2^X$ with $|A|=1$. For $A\in\mathcal{I}$, by (\ref{monotonecon1}) we have that $g(A)=f(A)\geq 0$. For $A\notin\mathcal{I}$, it suffices to set $g(A)=d_{A}$, where $d_A$ is any nonnegaative number. Therefore, the above statement holds for $l=1$.

Assume that the above statement holds for $l=k$. We prove that it also holds for $l=k+1$.  For this, it suffices to prove that for any $A\in 2^X$ with $|A|=k+1$ and any $B\subset A$, we have that $g(A)\geq g(B)$.

Consider any set $A\in\mathcal{I}$ with $|A|=k+1$. By  (\ref{monotonecon1}) we have that $g(A)=f(A)$. For any set $B\subset A$, by the hereditary property of a matroid, we have that  $B\in\mathcal{I}$, which implies that $g(B)=f(B)$. So for any set $A\in\mathcal{I}$ with $|A|=k+1$ and any set $B\subset A$, by the condition that $f$ is monotone on $\mathcal{I}$, we have that $g(A)\geq g(B)$.

Consider any set $A\notin\mathcal{I}$ with $|A|=k+1$. By the induction hypothesis  for $l=k$, we have that for any set $B\subset A$  with $|B|=k$, $g(B)$ is well defined. Set $d_A\geq 0$ and  $$B^*\in\argmax\limits_{\substack{B: B\subset A \\|B|=|A|-1}}g(B),$$ and then define
\[g(A)= g(B^*)+d_A.\]
We have that
\begin{equation}
\label{inductionineq1}
g(A)\geq g(B)
\end{equation}
 for any set $B\subset A$ with $|B|=k$.
For any set $B\subset A$ with $|B|<k$, there must exist a set $A_k$ with $|A_k|=k$ such that $B\subset A_k\subset A$. By the induction hypothesis $l=k$ and (\ref{inductionineq1}), we have that 
\begin{equation}
\label{inductionineq2}
g(A)\geq g(A_k)\geq g(B).
\end{equation}
Combining (\ref{inductionineq1}) and (\ref{inductionineq2}), for any set $A\notin\mathcal{I}$ with $|A|=k+1$ and any set $B\subset A$, we have that $g(A)\geq g(B)$. Therefore, (\ref{monotoneform2})) holds for $l=k+1$.

We have so far shown  that there exists a monotone set function $g:2^X\rightarrow\mathbb{R}$ satisfying conditions~\emph{a} and \emph{b}. Next we prove that any monotone set function $g:2^X\rightarrow\mathbb{R}$ satisfying conditions~\emph{a} and \emph{b} can be expressed as in  (\ref{monotoneform}).

If $g$ satisfies condition~\emph{a}, then we have that 
\begin{equation}
\label{monotoneform3}
g(A)=f(A), \ \forall A\in\mathcal{I}.
\end{equation}
 If $g$ satisfies condition~\emph{b}, then for any set $A\notin\mathcal{I}$, we have that
\[g(A)\geq g(B^*),\]
which implies that there exists a nonnegative number $d_A$ such that 
\begin{equation}
\label{monotoneform4}
g(A)= g(B^*)+d_A,\ \forall A\notin\mathcal{I}.
\end{equation}
Combining~(\ref{monotoneform3}) and (\ref{monotoneform4}), we have that any monotone set function $g:2^X\rightarrow\mathbb{R}$ satisfying conditions~a and b can be expressed by  (\ref{monotoneform}).
\end{proof}
\end{prop}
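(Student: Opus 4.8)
The plan is to split the argument into two parts: first establishing \emph{existence} of a monotone extension $g$, and then showing that \emph{every} monotone extension necessarily takes the claimed form~(\ref{monotoneform}). For existence I would proceed by induction on the cardinality $l$ of the sets on which $g$ has been specified, building $g$ from the bottom up. I begin by setting $g(A)=f(A)$ for all $A\in\mathcal{I}$ at once; this is legitimate precisely because the hereditary property makes $\mathcal{I}$ downward closed, so the assignment is internally consistent. The inductive claim is that there is a function $g$ of the form~(\ref{monotoneform}) that is monotone when restricted to sets of size at most $l$. For the base case $l=1$ it suffices to guarantee $g(\{j\})\geq g(\emptyset)$ for every singleton: singletons in $\mathcal{I}$ inherit this from the monotonicity of $f$, while for a singleton outside $\mathcal{I}$ one simply chooses the free parameter $d_{\{j\}}\geq 0$.

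For the inductive step from $l=k$ to $l=k+1$ I would verify $g(A)\geq g(B)$ for every $A$ with $|A|=k+1$ and every proper subset $B\subset A$, splitting into cases according to whether $A\in\mathcal{I}$. If $A\in\mathcal{I}$, the hereditary property forces every $B\subset A$ into $\mathcal{I}$ as well, so $g(A)=f(A)$ and $g(B)=f(B)$, and the inequality is inherited directly from the monotonicity of $f$ on $\mathcal{I}$. If $A\notin\mathcal{I}$, I define $g(A)=g(B^*)+d_A$ with $B^*$ as in~(\ref{Bstar}) a maximizer of $g$ over the size-$k$ subsets of $A$ and $d_A\geq 0$, which immediately yields $g(A)\geq g(B)$ for every subset $B$ of size exactly $k$. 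For a smaller subset $B$ with $|B|<k$ I would interpose an intermediate set $A_k$ with $B\subset A_k\subset A$ and $|A_k|=k$, and chain $g(A)\geq g(A_k)\geq g(B)$, using the size-$k$ bound for the first inequality and the inductive hypothesis for the second.

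The characterization half is then short. Given any monotone $g$ satisfying conditions~a and~b, condition~a forces $g(A)=f(A)$ on all of $\mathcal{I}$. For $A\notin\mathcal{I}$, monotonicity gives $g(A)\geq g(B^*)$ with $B^*$ as in~(\ref{Bstar}), so setting $d_A=g(A)-g(B^*)\geq 0$ reproduces exactly the form~(\ref{monotoneform}); thus the representation is forced rather than merely available.

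The main obstacle I anticipate is not a single hard inequality but the bookkeeping required to make the induction airtight. One must confirm that $g(B)$ is already well defined for every size-$k$ subset $B$ before $g(A)$ can be assigned, and one must cope with the subtlety that the proper subsets of an $A\notin\mathcal{I}$ may themselves lie inside or outside $\mathcal{I}$. The hereditary property is exactly the hypothesis that keeps the $A\in\mathcal{I}$ case clean, and the chaining argument through an intermediate $A_k$ is what upgrades monotonicity from immediate predecessors of size $k$ to \emph{all} subsets; getting these two mechanisms to interlock correctly is where the care is needed.
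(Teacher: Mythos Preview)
Your proposal is correct and follows essentially the same approach as the paper's own proof: induction on cardinality for existence (with the same case split on $A\in\mathcal{I}$ versus $A\notin\mathcal{I}$ and the same chaining through an intermediate $A_k$), followed by the short characterization argument via $d_A=g(A)-g(B^*)$. The only cosmetic difference is that your base case is phrased as $g(\{j\})\geq g(\emptyset)$ rather than $g(\{j\})\geq 0$, which is in fact slightly more careful since the proposition does not assume $f(\emptyset)=0$.
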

In Proposition~\ref{mainprop1}, when $A\notin \mathcal{I}$, we define $g(A)$ using $B^*$ as defined in (\ref{Bstar}). But we are not restricted to using $B^*$ as the following lemma shows.
\begin{Lemma}
Assume that $g$ is a monotone set function defined on the uniform matroid of rank $k$. Then, for any set $A$ with  $|A|=k+1$, there exist nonnegative numbers $d_1,d_2,\ldots, d_{M}$ such that $$g(A)=g(A_1)+d_1=g(A_2)+d_2=\cdots=g(A_M)+d_M,$$
where $M = 2^{k+1}-2$ and $A_1,A_2,\ldots, A_M$ denote all nonempty strict subsets of $A$.
\begin{proof}
Without loss of generality, let \[A_M\in\argmax\limits_{\substack{B: B\subset A, |B|=|A|-1 }}g(B).\] By Proposition~\ref{mainprop1}, we have that there exist $d_M\geq 0$ such that $g(A)=g(A_M)+d_M$. Then, for any $i=1,\ldots, M-1$, setting  $d_i=d_M+g(A_M)-g(A_i)$ results in $$g(A)=g(A_1)+d_1=g(A_2)+d_2=\cdots=g(A_M)+d_M,$$
where $d_i\geq 0$, because $d_M\geq 0$ and $g(A_M)\geq g(A_i)$ for $i=1,\cdots, M-1$.
\end{proof}
\end{Lemma}
\subsection{Polymatroid Extension: From Uniform Matroid to Power Set}
\label{sec3.2}
We now turn our attension to extending polymatroid functions. The authors of \cite{Singh2012} pointed out that there are cases where a polymatroid  function defined on a matroid cannot be extended to one that is defined on the entire power set. In the theorem below, we give necessary and sufficient conditions for the existence of an extension of a polymatroid function defined on the uniform matroid of rank $k$ to the uniform matroid of rank $k+1$.

\begin{Theorem}
\label{Extensionthm}
Let  $f:\mathcal{I}\rightarrow \mathbb{R}$ be a polymatroid  function defined on the uniform matroid  of rank $k$.  Then $f$ can be extended to a  polymatroid function $g$ defined on the uniform matroid of rank $k+1$  if and only if  for any  $A\subseteq X$ with $|A|=k+1$, any  $B\subset A$ with $|B|=k
$, and any $a\in B$, 
\begin{equation}
\label{sufneccondition}
f(B)-f(B\setminus\{a\})\geq f(B^*)-f(A\setminus\{a\}),
\end{equation}
where
\begin{equation}
\label{Bstar2}
 B^*\in\argmax\limits_{\substack{B: B\subset A, |B|=k}}f(B).
\end{equation}
\end{Theorem}

\begin{proof}
$\rightarrow$

In this direction, we need to prove that (\ref{sufneccondition}) holds if $g$ is an extended polymatroid function defined on the uniform matroid of rank $k+1$.
If $g$ is a polymatroid function, we have that $g$ is monotone and submodular. If $g$ is monotone, then for any set $A\notin\mathcal{I}$ with $|A|=k+1$, we have 
\begin{equation}
\label{monotonereq1}
g(A)\geq g(B^*).
\end{equation}
If $g$ is submodular, then for  any set $A$ , any set $B\subset A$ with $|B|=k$, and any action $a\in B$, we have 
\begin{equation}
\label{submodularreq2}
g(B)-g(B\setminus\{a\})\geq g(A)-g(A\setminus \{a\}).
\end{equation}
Combining (\ref{monotonereq1}) and (\ref{submodularreq2}), we have 
\begin{equation}
\nonumber
g(B)-g(B\setminus\{a\})\geq g(B^*)-g(A\setminus \{a\}).
\end{equation}
Because $g$ is an extended function of $f$, we have that $g(B)=f(B)$, $g(B^*)=f(B^*)$, $g(B\setminus \{a\})=f(B\setminus \{a\})$, and $g(A\setminus\{a\})=f(A\setminus\{a\})$. Then the above inequality becomes 
\begin{equation}
\nonumber
f(B)-f(B\setminus\{a\})\geq f(B^*)-f(A\setminus \{a\}).
\end{equation}
which means that (\ref{sufneccondition}) holds.

$\leftarrow$

In this direction, we prove that if (\ref{sufneccondition}) holds, then  there exists a polymatroid function $g$ defined on the uniform matroid of rank $k+1$ that agrees with $f$ on the uniform matroid of rank $k$.

By Proposition~\ref{mainprop1}, we have that there exists an extended monotone set function $g$ of the following form defined on the uniform matroid of rank $k+1$:
\begin{equation}
\label{monotoneformuniformmatroid}
g(A)=\begin{cases}
f(A), &|A|\leq k,\\
f(B^*)+d_A, & |A|=k+1,
\end{cases}
\end{equation}
where $B^*$ is defined as in (\ref{Bstar2}) and $d_A$ is nonnegative.

We will prove that there exists $d_A$ for any $A\subset X$ with $|A|=k+1$ such that $g$ defined in (\ref{monotoneformuniformmatroid}) satisfies
 $g(\emptyset)=0$ and $g$ is submodular on $2^X$. 

Because  $f$ is a polymatroid function on the uniform matroid of $k$ and  $g(A)=f(A)$ for any $A\subseteq X$ with $|A|\leq k$, we have that $g(\emptyset)=f(\emptyset)=0$. For $g$ to be submodular on the uniform matroid of rank $k+1$, it suffices to have that for any $A\subseteq X$ with $|A|=k+1$, any $B\subset A$ with $|B|=k$, and any $a\in B$
\begin{equation}
\label{submdular_cond2}
g(B)-g(B\setminus \{a\})\geq g(A)-g(A\setminus \{a\}).
\end{equation}
For any  $A\subseteq X$ with $|A|=k+1$, 
by (\ref{monotoneformuniformmatroid}), we have that $g(A)=f(B^*)+d_A$, where $d_A\geq 0$. The inequality (\ref{sufneccondition}) implies that 
$f(B)-f(B^*)+f(A\setminus\{a\})-f(B\setminus \{a\})\geq 0$. So only if we set $d_A$ to satisfy
\begin{equation}
\label{summodular_eqcon}
  0\leq d_A\leq \min\limits_{B:B\subset A, |B|=k \ \text{and}\ a:a\in B}\{f(B)-f(B^*)+f(A\setminus\{a\})-f(B\setminus \{a\})\},
\end{equation}
we have that $g(A)\leq f(B)-f(B\setminus\{a\})+f(A\setminus\{a\})$, which implies that
(\ref{submdular_cond2}) holds.

This completes the proof.
\end{proof}

\begin{Remark}
Theorem~{\ref{Extensionthm}} provides  necessary and sufficient conditions for the existence of an extension of a polymatroid function defined on the uniform matroid of rank $k$ to the uniform matroid of rank $k+1$. We will show that the function in the following example, taken from~{\cite{Singh2012}}, does not have an extension because  {(\ref{sufneccondition})}  is not satisfied.

Example~1: Let $X=\{1,2,3\}$ and $\mathcal{I}=\{A: A\in X \ \text{and}\ |A|\leq 2\}$. Define $f:\mathcal{I}\rightarrow\mathbb{R}$ as follows:

$f(\emptyset)=0,$

$f(\{1\})=f(\{2\})=f(\{3\})=1$,

$f(\{1,2\})=f(\{1,3\})=1$, and $f(\{2,3\})=2$.

It is easy to show that the above function $f$ is a polymatroid function on the uniform matroid of rank 2. But as we now show, $f$ cannot be extended to a polymatroid function $g$ on the uniform matroid of rank $3$ which is also the power set.

Setting $A=X$, by (\ref{monotoneformuniformmatroid}), it is easy to see that $B^*=\{2,3\}$. Then we have $g(X)=f(\{2,3\})+d_X$, where $d_X\geq 0$.
If  {(\ref{sufneccondition})} holds for $A=X$, $B=\{1,2\}$, and $\{a\}=\{2\}$, we have the following inequality:
$$f(\{1,2\})-f(\{2,3\})+ f(\{1,3\})-f(\{1\})\geq 0.$$
However,
$$f(\{1,2\})-f(\{2,3\})+ f(\{1,3\})-f(\{1\})=-1<0.$$
We conclude that {(\ref{sufneccondition})} does not hold always. Then by Theorem~{\ref{Extensionthm}}, we have that the polymatroid function $f$ defined above does not have an extended polymatroid function defined on the whole power set.
\end{Remark}
\subsection{Majorizing Extension}
\label{sec3.3}
Theorem~\ref{Extensionthm} and Proposition~\ref{mainprop1} together provide us an algorithm to extend a polymatroid function $f$ defined on the uniform matroid of rank $k$ to a polymatroid function $g$ defined on the uniform matroid of rank $k+1$. The procedure is to construct $g$ as in (\ref{monotoneformuniformmatroid}) with $d_A$ satisfying (\ref{summodular_eqcon}).
By (\ref{summodular_eqcon}), if for any $A$ with $|A|=k+1$,
$$\min\limits_{B:B\subset A, |B|=k \ \text{and}\ a:a\in B}\{f(B)-f(B^*)+f(A\setminus\{a\})-f(B\setminus \{a\})\}\geq 0,$$
then $f$ can be extended to $g$.  We say that $f$ is \emph{majorizingly extended} to $g$ if for any $A$ with $|A|=k+1$, we set 
\begin{equation}
\label{majorizingextension}
d_A=\min\limits_{B:B\subset A, |B|=k \ \text{and}\ a:a\in B}\{f(B)-f(B^*)+f(A\setminus\{a\})-f(B\setminus \{a\})\}.
\end{equation}

\begin{Remark}
The reason we are calling this particular construction of $g$ a majorizing extension is that the sequence $\{d_A\}$ (indexed by $A$) majorizes any other sequence $\{d_A'\}$ whose elements satisfy (\ref{summodular_eqcon}), because $d_A\geq d_A'$ for any $A\subseteq X$.

\end{Remark}
We just introduced the definition of a majorizing extension. We wish to explore what kind of polymatroid  functions can be majorizingly extended to ones defined on the whole power set. The following theorem states that a polymatroid function defined on the uniform matroid of rank 1 can be majorizingly extended to one  defined on the power set, and the extended function is additive.
\begin{Theorem}
\label{Polymatroidextension}
Let $X$ be a ground set and $f$  a polymatroid function defined on the uniform matroid of rank 1. Then $f$ can be majorizingly extended to a polymatroid function $g$ defined on the power set $2^X$ with \[g(\{x_1, x_2,\ldots, x_k\})=\sum\limits_{j=1}^kf(\{x_j\})\] for any set $\{x_1,x_2,\ldots, x_k\}\subseteq X$.
\end{Theorem}

\begin{proof}
We will prove the theorem by induction on $k$.
Without loss of generality, we assume for convenience that $X=\{1,2,\ldots, N\}$ and   $f(\{1\})\leq f(\{2\})\leq \cdots\leq f(\{N\})$.

First, we prove the claim for $k=2$, i.e., $g(\{x_1,x_2\})=f(\{x_1\})+f(\{x_2\})$ for any $\{x_1, x_2\}\subseteq X\ (x_1< x_2).$ 
By the assumption above and (\ref{monotoneformuniformmatroid}), we have that $$g(\{x_1, x_2\})=f(\{x_2\})+d_{\{x_1, x_2\}}.$$
By (\ref{majorizingextension}), we have that $d_{\{x_1, x_2\}}= f(\{x_1\}),$ which  results in 
$g(\{x_1, x_2\})=f(\{x_1\})+f(\{x_2\}).$

Now assume that the claim   holds for
$k \leq l$ ($l>2$). Then we prove that it also holds  for 
$k = l+1$ ($l>2$). Without loss of generality, we assume that $x_1<x_2<\cdots <x_{l+1}$.
Then by (\ref{monotoneformuniformmatroid}) and the induction hypothesis for $k\leq l$, we have that \[g(\{x_1, x_2,\ldots, x_{l+1}\})=g(\{x_2, \ldots, x_{l+1}\})+d=\sum\limits_{j=2}^{l+1}f(\{x_j\})+d.\]
For any $B=\{x_1, x_2,\ldots, x_{l+1}\}\setminus\{x_m\}$ and $a=x_n\in B$, by (\ref{majorizingextension}), we have that 
\begin{align*}
d&=  \min\limits_{m,n}
\left\{\sum\limits_{j=1}^{l+1}f(\{x_{j}\})-f(\{x_m\})-\sum\limits_{j=2}^{l+1}f(\{x_j\})+\sum\limits_{j=1}^{l+1}f(\{x_j\})-f(\{x_n\})\right. \notag\\
   &\quad\quad \left.-\left(\sum\limits_{j=1}^{l+1}f(\{x_j\})-f(\{x_m\})-f(\{x_n\})\right)\right\}\\
   &=f(\{x_1\}),
\end{align*}
which results in 
\[g(\{x_1,x_2,\ldots, x_{l+1}\})=\sum\limits_{j=1}^{l+1}f(\{x_j\}).\]
This completes the proof.
\end{proof}
Theorem~\ref{Polymatroidextension} shows that any polymatroid  function defined on the uniform matroid of rank 1 can be majorizingly extended to one defined on the whole power set. The following counterexample shows that the same is not the case for a uniform matroid of rank $2$.

Example~2: Let $X=\{1,2,3,4\}$ and $\mathcal{I}=\{A: A\in X \ \text{and}\ |A|\leq 2\}$. Define $f:\mathcal{I}\rightarrow\mathbb{R}$ as follows:

$f(\emptyset)=0,$

$f(\{1\})=1, f(\{2\})=2, f(\{3\})=3, f(\{4\})=4$,

$f(\{1,2\})=2.0760, f(\{1,3\})=3.2399, f(\{2,3\})=3.3678,$

$f(\{1,4\})=4.1233, f(\{2,4\})=4.4799, f(\{3,4\})=5.2518$.

It is easy to check that $f$ is a polymatroid function on the uniform matroid of rank 2. Now we show that $f$ can not be majorizingly extended to one  on the whole power set. Let $g$ denote the function obtained by the majorizing extension.

By (\ref{monotoneformuniformmatroid}) and (\ref{majorizingextension}), we have that 
$g(\{1,2,3\})=f(\{2,3\})+d_1$, and
\begin{align*}
 d_1&= \min\{f(\{1,2\}-f(\{2\}), f(\{1,2\})-f(\{2,3\}+f(\{1,3\}-f(\{1\}),\\
& \quad\quad\quad \quad f(\{1,3\})-f(\{3\})\}\\
&=0.0760,
\end{align*}
which results in $g(\{1,2,3\})=f(\{2,3\})+d_1=3.4438$.

Similarly, we have that 
$g(\{1,2,4\})=f(\{2,4\})+d_2$, 
$g(\{1,3,4\})=f(\{3,4\})+d_3$,
and $g(\{2,3,4\})=f(\{3,4\})+d_4$, where
\begin{align*}
 d_2&=\min\{f(\{1,2\})-f(\{2\}), f(\{1,2\})-f(\{2,4\})+f(\{1,4\})-f(\{1\}), \\
&\quad\quad\quad\quad f(\{1,4\})-f(\{4\})\}\\
&=0.0760,
\end{align*}
\begin{align*}
d_3&= \min\{f(\{1,3\}-f(\{3\}), f(\{1,3\})-f(\{3,4\})+f(\{1,4\})-f(\{1\}),
\\&\quad\quad\quad \quad f(\{1,4\})-f(\{4\})\}\\
&=0.1233,
\end{align*}
and 
\begin{align*}
d_4&=\min\{f(\{2,3\}-f(\{3\}), f(\{2,3\})-f(\{3,4\})+f(\{2,4\})-f(\{2\}),\\
&\quad\quad\quad\quad f(\{2,4\})-f(\{4\})\}\\
&=0.3678.
\end{align*}
Hence, we have
$g(\{1,2,4\})=f(\{2,4\})+d_2=4.5559$,
$g(\{1,3,4\})=f(\{3,4\})+d_3=5.3751$,
and 
$g(\{2,3,4\})=f(\{3,4\})+d_4=5.6196$.

Now majorizingly construct $g(\{1,2,3,4\})$.
By (\ref{monotoneformuniformmatroid}) and (\ref{majorizingextension}), we have that 
$g(\{1,2,3,4\})=g(\{2,3,4\})+d_5$, and
\begin{align*}
d_5&= \min\{ g(\{1,2,3\})-g(\{2,3,4\}+g(\{1,3,4\})-f(\{1,3\}),
\\& \quad g(\{1,2,3\})-g(\{2,3,4\})+g(\{1,2,4\})-f(\{1,2\}), \\
 &\quad g(\{1,2,4\})-g(\{2,3,4\})+g(\{1,3,4\})-f(\{1,4\}), \\
 &\quad g(\{1,2,3\}-f(\{2,3\}), g(\{1,2,4\})-f(\{2,4\}), g(\{1,3,4\})-f(\{3,4\})\}\\
 &=-0.0406<0.
\end{align*}
Therefore, $g$ defined as above is not  a polymatroid function. However, there are some polymatroid  functions defined on the uniform matroid of rank 2 that can be majorizingly extended to ones defined on the entire power set.
In Section~{\ref{Examples}}, we present two canonical examples that frequently arise in task scheduling and adaptive sensing and show that the objective functions in the two examples can be both majorizingly extended to polymatroid functions defined on the entire power set. Theorem~\ref{Polymatroidextension} implies that any monotone additive function defined on the uniform matroid of rank $k$ ($k>1$) can be majorizingly extended to one defined on the entire power set.
\subsection{Polymatroid Extension: From General Matroid to Power Set}
\label{sec3.4}
Theorem~3   and Proposition~\ref{mainprop1} together provide an iterative algorithm for us to extend  a polymatroid function $f$ defined on the matroid  $(X,\mathcal{I})$ to a polymatroid function $g$ defined on the entire power set. We use $g_k$ to denote a polymatroid function defined on the uniform matroid of rank $k$ satisfying $g_k(A)=f(A)$ for $A\in\mathcal{I}$ with $|A|\leq k$.
The idea is 
that we first define $g_1(A)=f(A)$ for $A\in\mathcal{I}$ with $|A|\leq 1$ and  $g_1(A)\geq 0$ for $A\notin\mathcal{I}$ with $|A|=1$.
 Then, iteratively extend $g_k$ defined on the uniform matroid of rank $k$ to  $g_{k+1}$ defined on the uniform matroid of rank $k+1$ using (\ref{monotoneformuniformmatroid}) and (\ref{summodular_eqcon})  for $k=1, 2,\ldots, |X|-1$. Finally, set $g=g_{|X|}$. This results in 
 \[g_{k+1}(A)=\left\{
                \begin{array}{ll}
                 g_k(A), \ \quad\quad  |A|
              
             \leq k\\
               f(A), \  \quad\quad \ A\in\mathcal{I}  \ \text{with}\ |A|=k+1\\
                  g_k(B^*)+d_{A}, \ \quad A\notin \mathcal{I} \ \text{with}\ |A|=k+1.
                \end{array}
              \right.\]

The specific process is given as follows:
\begin{itemize}
\item  First define \[g_1(A)=\left\{
                \begin{array}{ll}
                  f(A), \  \quad \ A\in\mathcal{I} \ \text{with}\ |A|\leq 1\\
                 d_A, \ \quad A\notin \mathcal{I} \ \text{with}\ |A|=1
                \end{array}
              \right.\]
              where $d_A\geq 0$.
\item Then iteratively define $g_{k+1}(A)$ for $k =1,\ldots,|X|-1$ using the following method:

Assume that $g_k(A)$ is well defined for $|A|\leq k$. For $A\subseteq X$ with $|A|\leq k$, set $g_{k+1}(A)=g_k(A)$. For $A\in\mathcal{I}$ with $|A|=k+1$, set $g_{k+1}(A)=f(A)$. For $A\notin \mathcal{I}$ with $|A|=k+1$, let $B^*\in\argmax\limits_{B:B\subseteq A,|B|=k}g_k(B)$. If 
\[d^*=\min\limits_{a\in B\subseteq A}\left\{[g_k(B)-g_k(B\setminus\{a\})]-[g_k(B^*)-g_k(A\setminus\{a\})]\right\}\geq 0,\]
then set $g_{k+1}(A)=g_k(B^*)+d_{A}$, where $0\leq d_{A}\leq d^*$; else, extension fails.
\item If $g_{|X|}$ exists, set $g=g_{|X|}$.
\end{itemize}

In the algorithm above, we do not specify the exact $d_A$ value. Of course, as before, we can choose $d_A=d^*$, leading to a majorizing extension. As we have seen before, the majorizing extension might not be a polymatroid function even if a polymatroid extension exists. Nonetheless, if indeed  a polymatroid extension exists, then there always exist choices of $d_A$ that produce the extension via the algorithm above. But the problem of finding an appropriate sequence of $d_A$ values can be reduced to that of finding a feasible path in a shortest-path problem (where shortest here could be defined in terms of the smallest total curvature of the extension). Solving this problem is tantamount to solving a problem of the form (1); in general, we would need to resort to something like dynamic programming. This implies that in general, finding a polymatroid function extension is nontrivial.

\section{Improved Bounds}
\label{sec4}

Let $f:2^X\rightarrow \mathbb{R}$ be a polymatroid function. Note that  $f:2^X\rightarrow \mathbb{R}$ is itself an extension of $f$ from $\mathcal{I}$ to the entire $2^X$, and the extended $f:2^X\rightarrow \mathbb{R}$ is a polymatroid function on the entire $2^X$. Therefore, Theorem~\ref{Extensionthm} gives rise in a straightforward way to the following result, stated without proof.
\begin{Proposition}
\label{Proposition1}
Let $(X,\mathcal{I})$ be a matroid and $f:2^X\rightarrow \mathbb{R}$ a polymatroid function on $2^X$. Then $c(f)\geq \inf_{g\in \Omega_f} c(g)$, where $\Omega_f$ is the set of all polymatroid functions $g$ on $2^X$ that agree with $f$ on $\mathcal{I}$.
\end{Proposition}

In this section, we will prove that for problem~$(\ref{problem})$,  if we set $d=\inf_{g\in \Omega_f} c(g)$, then the greedy strategy yields a $1/(1+d)$-approximation and a $(1-e^{-d})/d$-approximation under a general matroid  and a uniform matroid constraint, respectively. Some proofs in this section are straightforward, but are included for completeness.

\begin{Theorem}
\label{Improvedbound}
 Let $(X,\mathcal{I})$ be a matroid of rank $K$ and $f:\mathcal{I}\rightarrow\mathbb{R}$ a polymatroid function.  If there exists an extension of $f$ to the entire power set, then any greedy solution $G$ to problem~$(\ref{problem})$ satisfies
 \begin{equation}
 \label{bound1}
 \frac{f(G)}{f(O)}\geq \frac{1}{1+d},
 \end{equation}
where $d=\inf_{g\in \Omega_f} c(g)$. In particular, when  $(X,\mathcal{I})$ is a uniform matroid, any greedy solution $G$ to problem~$(\ref{problem})$ satisfies
\begin{align}
\label{bound2}
\frac{f(G)}{f(O)}&\geq\frac{1}{d}\left(1-\left(1-\frac{d}{K}\right)^K\right)> \frac{1}{d}\left(1-e^{-d}\right).
\end{align}
\end{Theorem}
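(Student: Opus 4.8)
The plan is to transfer the known curvature bounds of Theorems~\ref{Theorem2.1} and~\ref{Theorem2.2}, which hold for any \emph{fixed} polymatroid function on $2^X$, over to the restricted problem~(\ref{problem}) by running them on every admissible extension $g\in\Omega_f$ simultaneously and then optimizing the resulting family of inequalities over $g$. The hypothesis, together with Theorem~\ref{Extensionthm}, guarantees that $\Omega_f$ is nonempty, so this family of bounds is not vacuous.

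First I would fix an arbitrary $g\in\Omega_f$ and an arbitrary greedy solution $G$ of problem~(\ref{problem}). Since $g$ and $f$ agree on $\mathcal{I}$ and the greedy procedure only ever evaluates the objective on sets lying in $\mathcal{I}$, the greedy choices for problem~(\ref{problem}) and for problem~(\ref{problem2}) coincide at every step (the reasoning of Lemma~\ref{samesolution} is symmetric in $f$ and $g$ on $\mathcal{I}$); hence the greedy and optimal solution sets of the two problems are identical. In particular $G$ is a greedy solution of problem~(\ref{problem2}), an optimal solution $O$ of~(\ref{problem2}) is optimal for~(\ref{problem}), and because $G,O\in\mathcal{I}$ we have $g(G)=f(G)$ and $g(O)=f(O)$. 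Applying Theorem~\ref{Theorem2.1} to the polymatroid function $g$ on the matroid $(X,\mathcal{I})$ then gives
\[
\frac{f(G)}{f(O)}=\frac{g(G)}{g(O)}\geq\frac{1}{1+c(g)}.
\]

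The crucial step is that the left-hand side does not depend on the choice of $g$, so this inequality holds simultaneously for every $g\in\Omega_f$. I would therefore take the supremum of the right-hand side over $g\in\Omega_f$. Because $x\mapsto 1/(1+x)$ is continuous and strictly decreasing, $\sup_{g\in\Omega_f}1/(1+c(g))=1/\bigl(1+\inf_{g\in\Omega_f}c(g)\bigr)=1/(1+d)$, which yields~(\ref{bound1}). The uniform-matroid bound~(\ref{bound2}) is obtained identically, with Theorem~\ref{Theorem2.2} in place of Theorem~\ref{Theorem2.1}: for each $g$ one gets $f(G)/f(O)\geq(1-(1-c(g)/K)^K)/c(g)$, and since $c\mapsto(1-(1-c/K)^K)/c$ is nonincreasing on $(0,1]$, its supremum over $g$ is reached in the limit $c(g)\to d$, giving $(1-(1-d/K)^K)/d$; the strict inequality $(1-(1-d/K)^K)/d>(1-e^{-d})/d$ then follows from $(1-d/K)^K<e^{-d}$ exactly as in Theorem~\ref{Theorem2.2}.

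The main obstacle I anticipate is purely a matter of care rather than difficulty: the infimum $d=\inf_{g\in\Omega_f}c(g)$ need not be attained by any single extension $g$, so one cannot simply select the ``best'' $g$ and substitute it. This is precisely why the argument must be organized as a per-$g$ family of inequalities followed by a supremum, with the continuity and monotonicity of the bounding functions $1/(1+c)$ and $(1-(1-c/K)^K)/c$ in $c$ doing the work of making the limiting value $d$ appear in the final bound. A secondary point to verify explicitly is the exact coincidence of the greedy solution sets of~(\ref{problem}) and~(\ref{problem2}), including the handling of ties in the $\argmax$, which again rests only on $f\equiv g$ on $\mathcal{I}$.
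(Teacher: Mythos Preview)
Your proposal is correct and follows essentially the same approach as the paper: apply Theorems~\ref{Theorem2.1} and~\ref{Theorem2.2} to each extension $g\in\Omega_f$, invoke Lemma~\ref{samesolution} to identify the greedy and optimal solutions (and hence values) of problems~(\ref{problem}) and~(\ref{problem2}), and then pass to the infimum $d$. You are in fact more careful than the paper in making explicit the supremum-over-$g$ step and the monotonicity/continuity of the bounding functions needed when the infimum is not attained.
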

\begin{proof}
By Theorems~\ref{Theorem2.1} and~\ref{Theorem2.2}, for any extension $g$ of $f$ to the entire power set, we have the following inequalities 
\[\frac{g(G)}{g(O)}\geq \frac{1}{1+c(g)}\]
and 
\[\frac{g(G)}{g(O)}\geq\frac{1}{c(g)}\left(1-\left(1-\frac{c(g)}{K}\right)^K\right)> \frac{1}{c(g)}\left(1-e^{-c(g)}\right).\]
Because $f$ and $g$ agree on $\mathcal{I}$, we have that $f(G)=g(G)$ and $f(O)=g(O)$. Thus, $(\ref{bound1})$ and~$(\ref{bound2})$ hold for problem~$(\ref{problem})$.
\end{proof}
\begin{Remark}
Because the functions $1/(1+x)$, $(1-(1-x/K)^K)/x$, and $(1-e^{-x})/x$ are all nonincreasing in $x$ for $x\in(0,1]$ and from Proposition~\ref{Proposition1} we have $0<d\leq c(f)\leq 1$ when $f$ is defined on the entire power set, we have that $1/(1+d)\geq 1/(1+c(f))$, $((1-(1-d/K)^K)/d\geq (1-(1-c(f)/K)^K)/c(f)$, and $(1-e^{-d})/d\geq (1-e^{-c(f)})/c(f)$. This implies that our new bounds are, in general, stronger than the previous bounds.
\end{Remark}
\begin{Remark}
The bounds $1/(1+d)$ and $(1-e^{-d})/d$  apply to  problems where the objective function is a polymatroid function defined only for sets in the matroid and can be extended to one defined on the entire power set. However, these bounds still depend on sets not in the matroid, because of the way $d$ is defined.
\end{Remark}
Now we define a notion of \emph{partial curvature} that only involves sets in the matroid.
Let $h:\mathcal{I}\rightarrow \mathbb{R}$ be a set function. We define the partial curvature $b(h)$ as follows:
\begin{equation}
\label{curvature_b}
b(h)=\max_{\substack{j, A: j\in A\in\mathcal{I}\\ h(\{j\})\neq h(\emptyset)}}\left\{1-\frac{h(A)-h(A\setminus\{j\})}{h(\{j\})-h(\emptyset)}\right\}.
\end{equation}

For convenience, we use $b$ to denote $b(h)$ when there is no ambiguity. Note that $0\leq b\leq 1$ when $h$ is a polymatroid function on the matroid $(X,\mathcal{I})$, and $b=0$ if and only if $h$ is additive for sets in $\mathcal{I}$. When $b=0$, the greedy solution to problem~(\ref{problem}) coincides with the optimal solution, so we only consider $b\in(0,1]$ in the rest of the paper. For any extension of $f:\mathcal{I}\rightarrow\mathbb{R}$ to $g:2^X\rightarrow \mathbb{R}$ , we have that $c(g)\geq b(f)$, which will be proved in the following theorem. 

\begin{Theorem}
\label{comparison}
Let $(X,\mathcal{I})$ be a matroid and $f: \mathcal{I}\rightarrow \mathbb{R}$ a polymatroid function. Assume that a polymatroid extension $g:2^X\rightarrow\mathbb{R}$ of $f$ exists. Then $b(f)\leq c(g)$.
\end{Theorem} 
\begin{proof}
By submodularity of $g$ and $g(A)=f(A)$ for any $j\in A\in\mathcal{I}$, we have that
\[f(A)-f(A\setminus \{j\})\geq g(X)-g(X\setminus \{j\}),\]
which implies that for any $j\in A\in\mathcal{I}$,  
\[1-\frac{f(A)-f(A\setminus\{j\})}{f(\{j\})-f(\emptyset)}\leq 1-\frac{g(X)-g(X\setminus\{j\})}{g(\{j\})-g(\emptyset)}.\]
Hence, combining the above with $(\ref{totalcurvaturedef})$ and $(\ref{curvature_b})$ gives $b(f)\leq c(g)$.
\end{proof} 
\begin{Remark}
As mentioned earlier, the improved bounds involving $d$ in Theorem~\ref{Improvedbound} still depend on sets not in the matroid. In contrast, by definition, the partial curvature $b(f)$ depends on sets in the matroid. So if  there exists an extension  of $f$ to $g$ such that $c(g)=b(f)$, then we can derive bounds that are not influenced by sets outside the matroid. However, it turns out that there does not always exist a $g$ such that $c(g)=b(f)$;  we will give an example in Section~4.2 to show this. In the following theorem, we  provide necessary and sufficient conditions for  $c(g)=b(f)$.
\end{Remark}
\begin{Theorem}
\label{iff}
Let $(X,\mathcal{I})$ be a matroid and $f: \mathcal{I}\rightarrow \mathbb{R}$  a polymatroid function. Let $g:2^X\rightarrow\mathbb{R}$ be a polymatroid function that agrees with $f$ on $\mathcal{I}$. Then $c(g)=b(f)$ if and only if 
\begin{equation}
\label{sufneccond2}
g(X)-g(X\setminus \{a\})\geq (1-b(f))g(\{a\})
\end{equation}
 for any $a\in  X$, and equality holds for some $a\in X$.
\end{Theorem}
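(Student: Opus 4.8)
The plan is to translate the algebraic inequality~(\ref{sufneccond2}) directly into a statement about the individual terms that appear in the maximization defining the total curvature $c(g)$, and then read off the equivalence using Theorem~\ref{comparison}, which already supplies $b(f)\leq c(g)$. Both $c(g)$ and the condition~(\ref{sufneccond2}) are expressed purely through the values of $g$, so the bridge between them is a one-line rearrangement. First I would record that $g(\emptyset)=f(\emptyset)=0$, so that the denominator $g(\{a\})-g(\emptyset)$ in~(\ref{totalcurvaturedef}) is just $g(\{a\})$. Then, for any $a\in X$ with $g(\{a\})\neq 0$, the term corresponding to $a$ in $c(g)$ equals $1-\bigl(g(X)-g(X\setminus\{a\})\bigr)/g(\{a\})$, and dividing~(\ref{sufneccond2}) through by $g(\{a\})>0$ shows that~(\ref{sufneccond2}) at such an $a$ is literally the assertion that this term is at most $b(f)$. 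For the remaining $a$, those with $g(\{a\})=0$, condition~(\ref{sufneccond2}) collapses to $g(X)\geq g(X\setminus\{a\})$, which holds automatically by monotonicity of $g$ and which plays no role in $c(g)$.

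With this dictionary in hand, both directions are immediate. For the forward implication, assume $c(g)=b(f)$. Every term in the maximum defining $c(g)$ is then at most $c(g)=b(f)$, which is exactly the inequality~(\ref{sufneccond2}) at every $a$ with $g(\{a\})\neq 0$, while the elements with $g(\{a\})=0$ are covered by monotonicity; moreover the maximum is attained at some $a^{*}$ with $g(\{a^{*}\})\neq 0$, and attainment of the value $b(f)$ there is precisely the equality in~(\ref{sufneccond2}) at $a=a^{*}$. For the converse, assume~(\ref{sufneccond2}) holds at every $a$. Rewriting each instance term by term as above gives that every term in the maximum is at most $b(f)$, hence $c(g)\leq b(f)$; combining this with the reverse inequality $b(f)\leq c(g)$ from Theorem~\ref{comparison} yields $c(g)=b(f)$.

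I expect the only point requiring care to be the bookkeeping around elements $a$ with $g(\{a\})=0$, since the quantifier ``for any $a\in X$'' in~(\ref{sufneccond2}) ranges over all of $X$, whereas the maximization in~(\ref{totalcurvaturedef}) excludes exactly these elements. Two things must be checked: that such $a$ impose nothing beyond monotonicity (immediate), and that the witness $a^{*}$ for the equality clause may be taken with $g(\{a^{*}\})\neq 0$. The latter holds because $b(f)\in(0,1]$ guarantees that the maximization in $c(g)$ runs over a nonempty set of elements of positive singleton value, so its maximum is attained at such an element; beyond this observation no further argument is needed. It is also worth noting, as a sanity check on the statement, that the inequality part of~(\ref{sufneccond2}) together with Theorem~\ref{comparison} already forces $c(g)=b(f)$, so the equality clause is in fact a consequence rather than an independent hypothesis, exactly as the forward direction confirms.
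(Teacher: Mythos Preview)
Your proposal is correct and follows essentially the same route as the paper: both arguments are direct rewritings of the definition of $c(g)$, term by term, into the inequality~(\ref{sufneccond2}). The only minor difference is that in the $\leftarrow$ direction the paper uses the equality clause to pin down $c(g)=b(f)$, whereas you obtain $c(g)\leq b(f)$ from the inequalities alone and then invoke Theorem~\ref{comparison} for the reverse inequality---which, as you correctly note, shows the equality clause is redundant as a hypothesis.
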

\begin{proof}
$\rightarrow$

In this direction, we assume that $c(g)=b(f)$ and then to prove  that $g(X)-g(X\setminus \{a\})\geq (1-b(f))g(\{a\})$ for any $a\in  X$ and that equality holds for some $a\in X$. By the definition of the total curvature $c$ of $g$ and $c(g)=b(f)$, we have for any $a\in X$,
\[g(X)-g(X\setminus\{a\})\geq (1-b(f))g(\{a\}),\] 
and equality holds for some $a\in X$.
 
$\leftarrow$

Now we assume that $g(X)-g(X\setminus \{a\})\geq (1-b(f))g(\{a\})$ for any $a\in X$  and that equality holds for some $a\in X$, and then prove that $c(g)=b(f)$. 
By the assumptions, we have
\[1-\frac{g(X)-g({X\setminus\{a\}})}{g(\{a\})-g(\emptyset)}\leq b(f)\]
for any $a\in X$, and equality holds for some $a\in X	$.
By the definition of the total curvature $c$ of $g$, we have
\[c(g)=\max_{\substack{a\in X\\ g(\{a\})\neq g(\emptyset)}}\left\{1-\frac{g(X)-g({X\setminus\{a\}})}{g(\{a\})-g(\emptyset)}\right\}= b(f).\]

This completes the proof. 
\end{proof}
\begin{Remark}
In Section~\ref{Examples}, we will provide a task scheduling example to show that there exists a polymatroid function $g:2^X\rightarrow\mathbb{R}$ that agrees with $f:\mathcal{I}\rightarrow\mathbb{R}$ such that $c(g)=b(f)$. We also provide a contrasting example from an adaptive sensing problem where such an extension does not exist.
\end{Remark}
Combining Theorems~\ref{Improvedbound} and~\ref{iff}, we have the following corollary. 
\begin{Corollary}
\label{ImprovedBoundsCor}
Let $(X,\mathcal{I})$ be a matroid of rank $K$. Let $g:2^X\rightarrow\mathbb{R}$ be a polymatroid function that agrees with $f$ on $\mathcal{I}$ and $g(X)-g(X\setminus \{a\})\geq (1-b(f))g(\{a\})$ for any $a\in X$ with equality holding for some $a\in X$. Then, any greedy solution $G$ to problem~$(\ref{problem})$ satisfies
 \begin{equation}
 \label{bound12}
 \frac{f(G)}{f(O)}\geq \frac{1}{1+b(f)}.
 \end{equation}
 In particular, when  $(X,\mathcal{I})$ is a uniform matroid, any greedy solution $G$ to problem~$(\ref{problem})$ satisfies
\begin{align}
\label{bound22}
\frac{f(G)}{f(O)}&\geq\frac{1}{b(f)}\left(1-\left(1-\frac{b(f)}{K}\right)^K\right)> \frac{1}{b(f)}\left(1-e^{-b(f)}\right).
\end{align}
\end{Corollary}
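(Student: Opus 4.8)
The plan is to combine Theorems~\ref{Improvedbound} and~\ref{iff} exactly as the statement of the corollary advertises. Concretely, I would first pin down that $c(g)=b(f)$ for the given extension $g$, then deduce that the infimum $d=\inf_{g'\in\Omega_f}c(g')$ that appears in Theorem~\ref{Improvedbound} actually equals $b(f)$, and finally read off (\ref{bound12}) and (\ref{bound22}) by substituting $d=b(f)$ into the already-established bounds (\ref{bound1}) and (\ref{bound2}). Everything downstream of these two identities is pure substitution, so the real content is the two reductions $c(g)=b(f)$ and $d=b(f)$.

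First I would establish $c(g)=b(f)$. Since $g$ has the form (\ref{monotoneform}) and satisfies (\ref{sufneccondition}), Theorem~\ref{comparison} immediately gives $b(f)\le c(g)$. For the reverse inequality I would invoke the second hypothesis $g(X)-g(X\setminus\{a\})\ge(1-b(f))g(\{a\})$: dividing by $g(\{a\})-g(\emptyset)=g(\{a\})$ (recall $g(\emptyset)=0$) and rearranging yields $1-(g(X)-g(X\setminus\{a\}))/(g(\{a\})-g(\emptyset))\le b(f)$ for every $a$ with $g(\{a\})\neq g(\emptyset)$. Taking the maximum over such $a$, which by (\ref{totalcurvaturedef}) is exactly $c(g)$, gives $c(g)\le b(f)$, and hence $c(g)=b(f)$. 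This is precisely the $\leftarrow$ direction of Theorem~\ref{iff}.

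Next I would show $d=b(f)$. Because $g$ agrees with $f$ on $\mathcal{I}$, we have $g\in\Omega_f$, so $d\le c(g)=b(f)$. Conversely, Theorem~\ref{comparison} applies to every admissible extension $g'\in\Omega_f$, giving $c(g')\ge b(f)$ for all such $g'$ and therefore $d\ge b(f)$. Hence $d=b(f)$. Since the hypotheses of Theorem~\ref{Improvedbound}, namely the existence of an extension of the form (\ref{monotoneform}) satisfying (\ref{sufneccondition}), are met by $g$, the bounds (\ref{bound1}) and (\ref{bound2}) hold with this $d$; substituting $d=b(f)$ then delivers (\ref{bound12}) and (\ref{bound22}) for both the general matroid and the uniform matroid cases.

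The proof is essentially bookkeeping, and the one point that deserves care is recognizing that the single inequality assumed in the corollary, together with the lower bound $b(f)\le c(g)$ from Theorem~\ref{comparison}, already forces the sandwich $b(f)\le c(g)\le b(f)$; so one does not need to separately posit that equality in (\ref{sufneccond2}) holds for some $a$, as that ``equality for some $a$'' clause of Theorem~\ref{iff} is automatic once $c(g)$ is attained as a maximum. The very same sandwiching is what pins the infimum $d$ to $b(f)$, and no genuinely new estimate is required: the approximation guarantees themselves are inherited wholesale from Theorems~\ref{Theorem2.1} and~\ref{Theorem2.2} through Theorem~\ref{Improvedbound}.
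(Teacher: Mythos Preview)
Your proposal is correct and follows exactly the approach the paper indicates, namely combining Theorems~\ref{Improvedbound} and~\ref{iff}; indeed the paper gives no proof beyond that one sentence, and your write-up simply makes explicit the sandwiching $b(f)\le c(g)\le b(f)$ (the lower bound from Theorem~\ref{comparison}, the upper bound from the hypothesis) and the resulting identification $d=b(f)$. Your remark that the ``equality for some $a$'' clause of Theorem~\ref{iff} is automatic here, because Theorem~\ref{comparison} already forces $c(g)\ge b(f)$, is a nice clarification of why the corollary needs only the inequality in its hypothesis.
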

The bounds ${1}/({1+b(f)})$ and $(1-\left(1-{b(f)}/{K}\right)^K)/b(f)$ do not depend on sets outside the matroid, so they apply to problems where the objective function is only defined on the matroid, provided that an extension that satisfies the assumptions in Theorem~\ref{iff} exists. When $f$ is defined on the entire power set, from Theorem~\ref{comparison}, we have $b(f)\leq c(f)$, which implies that the bounds are stronger than those  from \cite{conforti1984submodular}. 
\section{Examples}
\label{Examples}
We first provide a task scheduling example where we majorizingly extend $f:\mathcal{I}\rightarrow \mathbb{R}$ to a polymatroid function $g_1:2^X\rightarrow\mathbb{R}$ with $c(g_1)>b(f)$. We also extend $f:\mathcal{I}\rightarrow \mathbb{R}$ to another polymatroid function $g_2:2^X\rightarrow\mathbb{R}$ with $c(g_2)=b(f)$. The two extensions both result in stronger bounds than the previous bound from \cite{conforti1984submodular}.
 Then we provide an adaptive sensing example to majorizingly extend $f:\mathcal{I}\rightarrow \mathbb{R}$  to a polymatroid function $g_1:2^X\rightarrow\mathbb{R}$ and show that there does not exist any extension of $f$  to $g$ such that  $c(g)=b(f)$ holds. However, in this example, it turns out that for our majorizing extension $g_1$, $c(g_1)$ is very close to $b(f)$ and is much smaller than $c(f)$. 
\subsection{Task Scheduling}
As a canonical example of problem (\ref{problem}), we will consider the task assignment problem that was posed in \cite{streeter2008online}, and was further analyzed in \cite{ZhC13J}--\cite{YJ2016}.  In this problem, there are $n$ subtasks and a  set $X$ of $N$ agents $a_j$ $(j=1,\ldots, N).$ At each stage, a subtask $i$ is assigned to an agent $a_j$, who successfully accomplishes the task with probability $p_i(a_j)$. Let $X_i({a_1,a_2,\ldots, a_k})$ denote the Bernoulli random variable that describes whether or not subtask $i$ has been accomplished after performing the sequence of actions ${a_1,a_2,\ldots, a_k}$ over $k$ stages. Then $\frac{1}{n}\sum_{i=1}^n	X_i(a_1,a_2,\ldots,a_k)$ is the fraction of subtasks accomplished after $k$ stages by employing agents $a_1,a_2,\ldots, a_k$. The objective function $f$ for this problem is the expected value of this fraction, which can be written as

$$f(\{a_1,\ldots,a_k\})=\frac{1}{n}\sum_{i=1}^n\left(1-\prod_{j=1}^k(1-p_i(a_j))\right).$$

Assume that $p_i(a)>0$ for any $a\in X$; then it is easy to check that $f$ is non-decreasing. Therefore, when $\mathcal{I}=\{S\subseteq X: |S|\leq K\}$, the solution to this problem should be of size $K$.  Also, it is easy to check that the function $f$ has the submodular property.

For convenience, we only consider the special case $n=1$; our analysis can be generalized to any $n\geq 2$. In this case, we have 
\begin{equation}
\label{objectivefunction}
f(\{a_1,\ldots,a_k\})=1-\prod_{j=1}^k(1-p(a_j)),
\end{equation}
where $p(\cdot)=p_1(\cdot)$.

Let $X=\{a_1,a_2,a_3,a_4\}$, $p(a_1)=0.4$, 
$p(a_2)=0.6$, $p(a_3)=0.8$, and $p(a_4)=0.9$. Then, $f(A)$ is defined as in (\ref{objectivefunction}) for any $A=\{a_i,\ldots,a_k\}\subseteq X$. Consider $K=2$, then  $\mathcal{I}=\{S\subseteq X: |S|\leq 2\}$. It is easy to show that $f:\mathcal{I}\rightarrow \mathbb{R}$ is a polymatroid function.

We first majorizingly extend $f:\mathcal{I}\rightarrow \mathbb{R}$ to a polymatroid function $g_1:2^X\rightarrow\mathbb{R}$ using (\ref{monotoneformuniformmatroid}) and (\ref{majorizingextension}).
By (\ref{monotoneformuniformmatroid}), we have that 
$ g_1(\{a_1,a_2,a_3\})=f(\{a_2,a_3\})+d_{\{a_1,a_2,a_3\}},$ $g_1(\{a_1,a_2,a_4\})=f(\{a_2,a_4\})+d_{\{a_1,a_2,a_4\}},$ $g_1(\{a_1,a_3,a_4\})=f(\{a_3,a_4\})+d_{\{a_1,a_3,a_4\}},$ and
 $ g_1(\{a_2,a_3,a_4\})=f(\{a_3,a_4\})+d_{\{a_2,a_3,a_4\}}.$
By  (\ref{majorizingextension}), we have that 
\begin{align*}
 d_{\{a_1,a_2,a_3\}}= \min\{&f(\{1,2\}-f(\{2\}), f(\{1,2\})-f(\{2,3\}+f(\{1,3\}-f(\{1\}),\\
&f(\{1,3\})-f(\{3\})\}= 0.08,
\end{align*}
\begin{align*}
 d_{\{a_1,a_2,a_4\}}=\min\{&f(\{1,2\})-f(\{2\}), f(\{1,2\})-f(\{2,4\})+f(\{1,4\})-f(\{1\}), \\
&f(\{1,4\})-f(\{4\})\}=0.04,
\end{align*}
\begin{align*}
d_{\{a_1,a_3,a_4\}}= \min\{&f(\{1,3\}-f(\{3\}), f(\{1,3\})-f(\{3,4\})+f(\{1,4\})-f(\{1\}),
\\& f(\{1,4\})-f(\{4\})\}=0.04,
\end{align*}
\begin{align*}
d_{\{a_2,a_3,a_4\}}=\min\{&f(\{2,3\}-f(\{3\}), f(\{2,3\})-f(\{3,4\})+f(\{2,4\})-f(\{2\}),\\
&f(\{2,4\})-f(\{4\})\}=0.06.
\end{align*}
Hence,  $g_1(\{a_1,a_2,a_3\})=1, g_1(\{a_1,a_2,a_4\})=1, g_1(\{a_1,a_3,a_4\})=1.02$, and $g_1(\{a_2,a_3,a_4\})=1.04.$

We now construct $g_1(X)$. By (\ref{monotoneformuniformmatroid}), we have that 
$g_1(X)=g_1(\{a_2,a_3,a_4\})+d_X$. By  (\ref{majorizingextension}), we have that
\begin{align*}
d_X= \min\{&g_1(\{1,2,3\})-g_1(\{2,3,4\}+g_1(\{1,3,4\})-f(\{1,3\}),
\\ &g_1(\{1,2,3\})-g_1(\{2,3,4\})+g_1(\{1,2,4\})-f(\{1,2\}),\\
 &g_1(\{1,2,4\})-g_1(\{2,3,4\})+g_1(\{1,3,4\})-f(\{1,4\})\\
 & g_1(\{1,2,3\}-f(\{2,3\}),g_1(\{1,2,4\})-f(\{2,4\}),\\
 &g_1(\{1,3,4\})-f(\{3,4\})\}=0.04,
\end{align*}
Hence, $g_1(X)=g_1(\{a_2,a_3,a_4\})+d_X=1.08$.
Therefore, $g_1$ defined as above is a majorizing extension of $f$ from $\mathcal{I}$ to the whole power set.
 
The total curvature $c$ of $g_1:2^X\rightarrow\mathbb{R}$ is
\begin{align*}
c(g_1)=&\max_{a_i\in X}\left\{1-\frac{g(X)-g(X\setminus\{a_i\})}{g(\{a_i\})-g(\emptyset)}\right\}=0.911
.
\end{align*}
In contrast, the total curvature $c$ of $f$ is 
\begin{align*}
c(f)&=\max_{a_i\in X}\left\{1-\frac{f(X)-f(X\setminus\{a_i\})}{f(\{a_i\})-f(\emptyset)}\right\}\\
&=\max\limits_{a_i,a_j,a_k\in X}\left\{1-\left(1-p(\{a_i\})\right)(1-p(\{a_j\}))(1-p(\{a_k\}))\right\}\\
&=0.992.
\end{align*}
By the definition of the partial curvature $b$ of $f$, we have
\begin{align*}
b(f)&=\max_{\substack{j\in A\subseteq X, |A|=2, \\ f(\{j\})\neq 0}}\left\{1-\frac{f(A)-f(A\setminus\{j\})}{f(\{j\})-f(\emptyset)}\right\}\\
&=\max_{\substack{\{a_i,a_j\}\subseteq X}}\left\{1-\frac{f(\{a_i,a_j\})-f(\{a_i\})}{f(\{a_j\})}\right\}\\
&=\max_{\substack{a_i\in X}}\left\{p(\{a_i\})\right\}=0.9.
\end{align*}

We can see that $c(g_1)$ is close to $b(f)$ and smaller than $c(f)$ though $c(g_1)\neq b(f)$.

Next, we give another extension $g_2$ which satisfies that $c(g_2)=b(f)$. By (\ref{monotoneformuniformmatroid}), we have that
$ g_2(\{a_1,a_2,a_3\})=f(\{a_2,a_3\})+d_{\{a_1,a_2,a_3\}},$ $g_2(\{a_1,a_2,a_4\})=f(\{a_2,a_4\})+d_{\{a_1,a_2,a_4\}},$ $g_2(\{a_1,a_3,a_4\})=f(\{a_3,a_4\})+d_{\{a_1,a_3,a_4\}},$ and
 $ g_2(\{a_2,a_3,a_4\})=f(\{a_3,a_4\})+d_{\{a_2,a_3,a_4\}}.$
First, we will define $d_{\{a_1,a_2,a_3\}}.$
By (\ref{summodular_eqcon}), we have that
\begin{align*}
d_{\{a_1,a_2,a_3\}}\leq \min\{ &f(\{a_1,a_2\})-f(\{a_2\}),f(\{a_1,a_3\})-f(\{a_3\}),\\&f(\{a_1,a_2\})-f(\{a_2,a_3\})+f(\{a_1,a_3\})-f(\{a_1\})\}=0.08.
\end{align*}
By (\ref{sufneccond2}), it suffices to  have that
\begin{align*}
d_{\{a_1,a_2,a_3\}}\geq\max\{&(1-b)f(\{a_1\}),\\
&f(\{a_1,a_3\})-f(\{a_2,a_3\})+(1-b)f(\{a_2\}),\\
&f(\{a_1,a_2\})-f(\{a_2,a_3\})+(1-b)f(\{a_3\})\}=0.04.
\end{align*}

Setting $d_{\{a_1,a_2,a_3\}}= 0.04$ to satisfy the above two inequalities gives that
$g_2(\{a_1,a_2,a_3\})=f(\{a_2,a_3\})+d_{\{a_1,a_2,a_3\}}=0.96$.
Similarly, we set 

$\quad\quad g_2(\{a_1,a_2,a_4\})=f(\{a_2,a_4\})+d_{\{a_1,a_2,a_4\}}=1,$

 $\quad\quad g_2(\{a_1,a_3,a_4\})=f(\{a_3,a_4\})+d_{\{a_1,a_3,a_4\}}=1.02,$
 
 $\quad\quad g_2(\{a_2,a_3,a_4\})=f(\{a_3,a_4\})+d_{\{a_2,a_3,a_4\}}=1.04.$
 
We now define $g_2(X)$. By (\ref{monotoneformuniformmatroid}), we have that $g_2(X)=g_2(\{a_2,a_3,a_4\})+d_X$.
By (\ref{summodular_eqcon}), it suffices to have that
\begin{align*}
d_X&\leq \min\{g_2(\{a_1,a_2,a_4\})-f(\{a_2,a_4\}),\\
&g_2(\{a_1,a_3,a_4\})-f(\{a_3,a_4\}), g_2(\{a_1,a_2,a_3\})-f(\{a_2,a_3\}),\\
&g_2(\{a_1,a_2,a_3\})-g_2(\{a_2,a_3,a_4\})+g_2(\{a_1,a_2,a_4\})-f(\{a_1,a_2\}),\\
&g_2(\{a_1,a_3,a_4\})-g_2(\{a_2,a_3,a_4\})+g_2(\{a_1,a_2,a_4\})-f(\{a_1,a_4\}),\\
&g_2(\{a_1,a_3,a_4\})-g_2(\{a_2,a_3,a_4\})+g_2(\{a_1,a_2,a_3\})-f(\{a_1,a_3\})
\}=0.04.
\end{align*}

By (\ref{sufneccond2}), it suffices to  have that
\begin{align*}
d_X\geq \max\{ &(1-b)f(\{a_1\}),\\
&g_2(\{a_1,a_3,a_4\})-g_2(\{a_2,a_3,a_4\})+(1-b)f(\{a_2\}),\\
&g_2(\{a_1,a_2,a_4\})-g_2(\{a_2,a_3,a_4\})+(1-b)f(\{a_3\}),\\
&g_2(\{a_1,a_2,a_3\})-g_2(\{a_2,a_3,a_4\})+(1-b)f(\{a_4\})
\}=0.04.
\end{align*}

Setting $d_X= 0.04$ to satisfy the above two inequalities gives us $g_2(X)=g_2(\{a_2,a_3,a_4\})+d_X=1.08$.

The total curvature $c$ of $g_2:2^X\rightarrow\mathbb{R}$ is
\begin{align*}
c(g_2)=&\max_{a_i\in X}\left\{1-\frac{g(X)-g(X\setminus\{a_i\})}{g(\{a_i\})-g(\emptyset)}\right\}=0.9=b(f)<c(f)=0.992.
\end{align*}

By Corollary~\ref{ImprovedBoundsCor}, we have that the greedy strategy for the task scheduling problem satisfies the bound $(1-(1-{b(f)}/{2})^2)/b(f)=0.775$, which is better than the previous bound 
$(1-(1-{c(f)}/{2})^2)/c(f)=0.752$.

\subsection{Adaptive Sensing}
For our second example, we consider the adaptive sensing design problem posed in  \cite{ZhC13J}--\cite{YJ2016}. Consider a signal of interest $x \in {\rm I\!R}^2$ with normal prior distribution $\mathcal{N} (0, I)$, where $I$ is the $2\times 2$ identity matrix; our analysis easily generalizes to dimensions larger than $2$. Let $\mathbb{A}=\{\mathrm{Diag}(\sqrt{\alpha},\sqrt{1-\alpha}):  \alpha\in\{\alpha_1,\ldots,\alpha_N\}\}$, where $ \alpha\in[0.5,1]$ for $1\leq i
\leq N$. At each stage $i$, we make a measurement $y_i$ of the form
\[
y_i=a_ix+w_i, 
\]
where $a_i \in\mathbb{A}$ and $w_i$ represents i.i.d.\ Gaussian measurement noise with mean zero and covariance $I$, independent of $x$.

The objective function $f$ for this problem is the information gain \cite{LiC12}, which can be written as
\begin{equation}
\label{objectivefunction2}
f(\{a_1,\ldots, a_k\})=H_0-H_k.
\end{equation}
Here, $H_0=\frac{N}{2}\text{log}(2\pi e)$ is the entropy of the prior
distribution of $x$ and $H_k$ is the entropy of the posterior
distribution of $x$ given $\{y_i\}_{i=1}^k$; that is,
\[
H_k=\frac{1}{2}\text{log det}(P_k)+\frac{N}{2}\text{log}(2\pi e),
\] 
where $P_k=\left(P_{k-1}^{-1}+a_k^Ta_k\right)^{-1}$ is the posterior covariance of $x$ given $\{y_i\}_{i=1}^k$. 

The objective is to choose a set of measurement matrices $\{a_i^*\}_{i=1}^K$, $a_i^*\in\mathbb{A}$, to maximize the information gain $
f(\{a_1,\ldots, a_K\})=H_0-H_K$. It is easy to check that $f$ is monotone, submodular, and $f(\emptyset)=0$; i.e., $f$ is a polymatroid function.

Let $X=\{a_1,a_2,a_3\}$, $\alpha_1=0.5$, 
$\alpha_2=0.6$, and $\alpha_3=0.8$. Then, $f(A)$ is defined as in (\ref{objectivefunction2}) for any $A=\{a_i,\ldots,a_k\}\subseteq X$. Consider $K=2$, where  $\mathcal{I}=\{S\subseteq X: |S|\leq 2\}$.

The total curvature  of $f$ is 
\begin{align*}
c(f)&=\max_{a_i\in X}\left\{1-\frac{f(X)-f(X\setminus\{a_i\})}{f(\{a_i\})-f(\emptyset)}\right\}\\
&=0.4509.
\end{align*}

We first majorizingly extend $f:\mathcal{I}\rightarrow\mathbb{R}$ to a polymatroid function $g_1$ defined on the whole power set. Then we show that there  does not exist a polymatroid extension $g_2$ such that $c(g_2)=b(f)$. However, for the majorizing extension $g_1$, it turns out that $c(g_1)$ is very close to $b(f)$ and is much smaller than $c(f)$.

We start by  majorizingly extending $f$ to $g_1$.
By (\ref{monotoneformuniformmatroid}) and (\ref{majorizingextension}), we have
 $g_1(X)=f(\{a_1,a_2\})+d_X$, where 
 \begin{align*}
d_{X}= \min\{ &f(\{a_1,a_3\})-f(\{a_1\}),f(\{a_2,a_3\})-f(\{a_2\}),\\&f(\{a_1,a_3\})-f(\{a_1,a_2\})+f(\{a_2,a_3\})-f(\{a_3\})\}=\log\sqrt{1.6799}.
\end{align*}
 Hence, $g_1(X)= \log\sqrt{6.7028}$.
 
 The total curvature of $g_1$ is 
\begin{align*}
c(g_1)&=\max_{a_i\in X}\left\{1-\frac{g_1(X)-g_1(X\setminus\{a_i\})}{g_1(\{a_i\})-g_1(\emptyset)}\right\}\\
&=0.3317.
\end{align*}

By the definition of the partial curvature $b$ of $f$, we have
\begin{align*}
b(f)&=\max_{\substack{j\in A\subseteq X, |A|=2, \\ f(\{j\})\neq 0}}\left\{1-\frac{f(A)-f(A\setminus\{j\})}{f(\{j\})-f(\emptyset)}\right\}\\
&=\max_{\substack{\{a_i,a_j\}\subseteq X}}\left\{1-\frac{f(\{a_i,a_j\})-f(\{a_i\})}{f(\{a_j\})}\right\}\\
&=0.3001.
\end{align*}

Comparing the values of $c(g_1), c(f),$ and $b(f)$, we have that $c(g_1)$ is much smaller than $c(f)$ and very close to $b(f)$.
By Theorem~\ref{Improvedbound}, we have that the greedy strategy for the adaptive sensing problem satisfies the bound $(1-(1-{c(g_1)}/{2})^2)/c(g_1)=0.9172$, which is stronger than the previous bound 
$(1-(1-{c(f)}/{2})^2)/c(f)=0.8873$. Now we try to extend $f$ to a polymatroid function $g_2$ such that $c(g_2)=b(f)$.
By (\ref{monotoneformuniformmatroid}),
 $g_2(X)=f(\{a_1,a_2\})+d_X$.
By (\ref{summodular_eqcon}), it suffices to have that
\begin{align*}
d_{X}&\leq \min\{f(\{a_1,a_3\})-f(\{a_1\}),f(\{a_2,a_3\})-f(\{a_2\}),\\&\quad \quad \quad \quad f(\{a_1,a_3\})-f(\{a_1,a_2\})+f(\{a_2,a_3\})-f(\{a_3\})\}\\
&= \log\sqrt{1.6799}.
\end{align*}
By~(\ref{sufneccond2}), it suffices to have that
\begin{align*}
d_{X}&\geq\max\{(1-b(f))f(\{a_3\}),\\
&\quad\quad\quad\quad f(\{a_2,a_3\})-f(\{a_1,a_2\})+(1-b(f))f(\{a_1\}),\\
&\quad\quad\quad\quad f(\{a_1,a_3\})-f(\{a_1,a_2\})+(1-b(f))f(\{a_2\})\},\\
&=\log\sqrt{1.7232}.
\end{align*}

Comparing the above two inequalities, we see that there does not exist $d_{X}$ such that $g_2$ is a polymatroid function satisfying $c(g_2)=b(f)$.

\section{Open Issues}
\label{openissues}
Suppose that a function $f$ defined on a matroid $(X,\mathcal{I})$ is extendable to the entire power set. We have shown that the majorizing extension algorithm does not always successfully produce this extension.  Next, we explored defining a notion of curvature $b(f)$ depending only on sets in the matroid $(X,\mathcal{I})$, and we asked if it is always possible to extend $f$ to $g$ in such a way that $c(g)=b(f)$. Here, again, we have shown that the answer is in general negative; we gave necessary and sufficient conditions for $c(g)=b(f)$. This leaves us with the following ultimate question: What extension $g$ of $f$ has the best (smallest) value of $c(g)$? Unfortunately, answering this question boils down to solving an optimization problem that is in general as difficult as (\ref{problem}), solvable using only  something like dynamic programming. This, of course, does not point to a practical algorithm for finding an extension with the best curvature.

\section{Acknowledgments}
This work is supported in part by NSF under award CCF-1422658, and by the Colorado State University Information Science and Technology Center (ISTeC).

\section*{References}

\bibliography{mybibfile}

\begin{thebibliography}{10}
\bibitem{Boros2003}
E.~Boros, K.~Elbassioni, V.~Gurvich, L.~Khachiyan, An inequality for polymatroid functions and its applications, Discrete Appl. Math.~131~(2)~(2003)~255--281.
\bibitem{nemhauser1978}
M.L.~Fisher,  G.L.~Nemhauser, L.A.~Wolsey, An analysis of approximations for maximizing submodular set functions-{II},  Math. Programming Study~8~(1978)~73--87.
\bibitem{nemhauser19781}
 G.L.~Nemhauser, L.A.~Wolsey, M.L.~Fisher, An analysis of approximations for maximizing submodular set functions-{I}, Math. Programming~14~(1)~(1978)~265--294.
\bibitem{conforti1984submodular}
M.~Conforti, G.~Cornu{\'e}jols, Submodular set functions, matroids and the greedy algorithm: {T}ight worst-case bounds and some generalizations of the {R}ado-{E}dmonds theorem,  Discrete Appl. Math.~7~(3)~(1984)~251--274.
\bibitem{vondrak2010submodularity}
J.~Vondr{\'a}k, Submodularity and curvature: The optimal algorithm, RIMS Kokyuroku Bessatsu~B23~(2010)~253--266.
\bibitem{sviridenko2015}
M.~Sviridenko, J.~Vondr{\'a}k, J.~Ward, Optimal approximation for submodular and supermodular optimization with bounded curvature, in: Proceedings of the 26th Annual ACM-SIAM Symp. Discrete Algorithms, 2015, pp.~1134--1148.
\bibitem{Edmonds}
J.~Edmonds, Submodular functions, matroids, and certain polyhedra, Combinatorial Optimization~2570~(2003)~11--26.
\bibitem{Tutte}
W.T.~Tutte, Lecture on matroids, J.~Research of the National Bureau of Standards-B.~Mathematics and Mathematical Physics
~69B~(1~and~2)~(1965)~1--47.
\bibitem{Singh2012}
A.P.~Singh, A.~Guillory, J.~Bilmes, On bisubmodular maximization, in: Proceedings of the 15th International Conference on Artificial Intelligence and Statistics, 2012, pp.~1055--1063.
\bibitem{streeter2008online} M.~Streeter, D.~Golovin, An online algorithm for maximizing submodular functions, in: Proceedings of NIPS: Advances in Neural Information Processing Systems 21, 2008, pp.~67--74.
\bibitem{ZhC13J}
Z.~Zhang, E.K.P.~Chong, A.~Pezeshki, W.~Moran, String submodular functions with curvature constraints, {IEEE} Trans. Autom. Control~61~(3)~(2016)~601--616.
\bibitem{YJ2015}
Y.~Liu, E.K.P.~Chong, and A.~Pezeshki, Bounding the greedy strategy in finite-horizon string optimization,
in: Proceedings of the 54th IEEE Conference on Decision and Control,~2015, pp.~3900--3905.
\bibitem{YJ2016}
Y.~Liu, E.K.P.~Chong, and A.~Pezeshki, Performance bounds for the $k$-batch greedy strategy in optimization problems with curvature, in: Proceedings of 2016 American Control Conference,~2016, pp.~7177--7182.
\bibitem{LiC12}
E.~Liu, E.K.P.~Chong, and L.L.~Scharf, Greedy adaptive linear compression in signal-plus-noise models, IEEE Trans.~Inf.~Theory ~60~(4)~(2014), 2269--2280.
\end{thebibliography}

\end{document}